\newcommand{\R}{{\mat R}}
\newcommand{\no}{\nonumber}
\newcommand{\be}{\begin{eqnarray}}
\newcommand{\ben}{\begin{eqnarray*}}
\newcommand{\en}{\end{eqnarray}}
\newcommand{\enn}{\end{eqnarray*}}
\newcommand{\pa}{\partial}
\newcommand{\dive}{{\rm div}}
\newcommand{\mat}{\mathbb}
\newcommand{\n}{\bm{n}}
\newcommand{\0}{\bm{0}}
\newtheorem{remark}[theorem]{Remark}
\newcommand{\dx}{\,\mathrm{d}x}
\newcommand{\ds}{\,\mathrm{d}s}
\newcommand{\dy}{\,\mathrm{d}y}
\begin{document}
\renewcommand{\theequation}{\arabic{section}.\arabic{equation}}

\title{\bf
Recovering discontinuous viscosity coefficients for inverse Stokes problems by boundary measurements}
\author{Yu Jia\thanks{School of Mathematics and Statistics, Xi'an Jiaotong University,
Xi'an, Shaanxi, 710049, China ({\tt yu.jia@stu.xjtu.edu.cn})}
\and
Chengyu Wu\thanks{School of Mathematics and Statistics, Xi'an Jiaotong University,
Xi'an, Shaanxi, 710049, China ({\tt wucy99@stu.xjtu.edu.cn})}
\and
Hao Wu\thanks{School of Mathematics and Statistics, Xi'an Jiaotong University,
Xi'an, Shaanxi, 710049, China ({\tt wuhao2022@stu.xjtu.edu.cn})}
\and
Jiaqing Yang\thanks{School of Mathematics and Statistics, Xi'an Jiaotong University,
Xi'an, Shaanxi, 710049, China ({\tt jiaq.yang@mail.xjtu.edu.cn})}
}
\date{}
\maketitle

\begin{abstract}
In this paper, we investigate the inverse Stokes problem of determining a discontinuous viscosity coefficient $\mu$ in a bounded domain $\Omega\subset\mathbb{R}^3$. By analyzing the singularity of the Dirichlet Green's functions in $H^1$-norm and constructing a specifically coupled Stokes-Brinkman system in a localized domain, we prove a global uniqueness theorem that the viscosity coefficient $\mu$ can be uniquely determined from boundary measurements.
\end{abstract}

\begin{keywords}
inverse problem, Stokes equations, discontinuous viscosity coefficient, Cauchy data 
\end{keywords}

\begin{AMS}
35Q35, 35R30, 76D05, 76D07
\end{AMS}

\pagestyle{myheadings}
\thispagestyle{plain}
\markboth{Y. Jia, C. Wu, H.Wu and J. Yang}{Recovering discontinuous viscosity coefficients for ISP by boundary measurements}
\section{Introduction}
\setcounter{equation}{0}
The inverse problems in hydrodynamics have recently garnered increasing attention due to their  significant applications  in various fields, such as oceanography, biomechanics, the oil industry, and naval engineering. A detailed investigation of this topic can be found in \cite{Bennett1992,Alvarez2007,Polydorides2008,Tanaka1994,Kohr2008}. In this paper, we focus on an inverse Stokes problem involving the recovery of a discontinuous viscosity coefficient from boundary measurements. Mathematically, let $\Omega$ be a bounded $C^{2}$-domains in $\mathbb{R}^3$ filled with an incompressible viscous fluid. Given boundary data $f$, the Stokes equations for the velocity field $\bm{u}:\Omega\rightarrow\mathbb{R}^3$ and the pressure field $p:\Omega\rightarrow\mathbb{R}$ are described as
\be
\begin{cases}\label{model1}
-{\rm{div}}(\mu(x)\mathcal{D}(\bm{u}))+\nabla p=0&\quad\text{ in }\Omega,\\
{\rm{div}}\bm{u}=0&\quad\text{ in }\Omega,\\
\bm{u}=\bm{f}&\quad\text{ on }\partial\Omega,
\end{cases}
\en
where $\mu$ is the viscosity coefficient . The strain tensor and the stress tensor associated with the flow $(\bm{u}, p)$ are given by
\ben
&&\mathcal{D}(\bm{u})=\frac{1}{2}([\nabla\bm{u}]+[\nabla\bm{u}]^{\top}),\\
&&T(\bm{u},p)=-p\mathbf{I}_{3}+\mu \mathcal{D}(\bm{u}),
\enn
respectively. Here, $\mathbf{I}_{3}$ denotes the identity matrix, and $\nabla\bm{u}$ is the matrix function whose $j$th column represents the gradient of the $j$th component of $\bm{u}$. Since ${\rm{div}}\bm{u}=0$, the vector $\bm{f}$ must satisfy the standard flux compatibility condition:
\be
\label{eq:condition1}
\int_{\partial\Omega}\bm{f}\cdot\bm{n}\ds=0,
\en
where $\bm{n}$ the outward normal vector on $\partial\Omega$. 

The well-posedness of Problem \eqref{model1} has been studied using either the variational approach or the integral equation method in the associated function spaces (see, e.g., \cite{Temam1984,Girault1986}). We can thus define the Dirichlet-to-Neumann (D-to-N) map as
\be
\label{eq:model.3}
\Lambda(\bm{f}):=t(\bm{u},p)\quad\text{ on }\partial\Omega,
\en
where $t(\bm{u},p):=T(\bm{u},p)\bm{n}$, and $\Lambda$ is considered to act between the space $\bm{H}^{1/2}_0(\partial\Omega)$ and $\bm{H}^{-1/2}(\partial\Omega)$, with $\bm{H}^{1/2}_0(\partial\Omega)$ defined as 
\ben
\bm{H}^{1/2}_{0}(\partial\Omega):=\lbrace \bm{f}\in\bm{H}^{1/2}(\partial\Omega): \int_{\partial\Omega}\bm{f}\cdot\bm{n}\ds=0\rbrace.
\enn
Furthermore, we define the following Cauchy data set
\ben
\mathcal{C}_{\mu}:=\{(\bm{f},\Lambda(\bm{f})):\bm{f}\in \bm{H}_0^{1/2}(\pa \Omega)\}.
\enn
Throughout this paper, we assume that $\mu$ is a discontinuous viscosity coefficient of the form
\be\label{model1-1}
\mu(x):=\mu_0\cdot\chi_{\Omega\backslash\overline{D}}(x)+a(x)\cdot\chi_{\overline{D}}(x),
\en
and satisfies the following conditions: 

$\rm{\romannumeral 1)}$  $\mu_0$ is a positive constant; 

$\rm{\romannumeral 2)}$ $a(x)\in C^{0,1}(\overline{D})$ with $a(x)>0$;

$\rm{\romannumeral 3)}$ $\mu(x)|_{\partial D}\neq\mu_0$ for any $x\in \partial D$.\\
Here, $D$ is assumed to belong to the class of admissible geometries
\ben
\mathcal{D}_{ad}:=\left\{D \subset\subset\Omega: { D \rm\ is\ open, bounded},\ C^{2}{\rm\ boundary \ and }\ \Omega\backslash\overline{D}{\rm \ is \ connected}\right\}
\enn
and $\chi_{\Omega\backslash\overline{D}}(x)$ represents the characteristic function in the subdomain $\Omega\backslash\overline{D}$, defined as 1 in $\Omega\backslash\overline{D}$ and 0 elsewhere. 

The inverse Stokes problem (ISP) is to determine $\mu$ from the Cauchy data set $\mathcal{C}_{\mu}$. The following theorem presents the main result regarding the global uniqueness of 
$\mu$.
\\
\begin{theorem}
\label{theorem2}
Assume $\mu_i|_{\overline{D_i}}\in C^{8}(\overline{D_i})$ with $D_i\in\mathcal{D}_{ad}$ for $i=1,2$. If $\mathcal{C}_{\mu_1}=\mathcal{C}_{\mu_2}$, then $\mu_1(x)=\mu_2(x)$ in $\Omega$.
\end{theorem}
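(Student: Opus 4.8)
The plan is to argue in two stages: first that the inclusions coincide, $D_1=D_2=:D$, and then that the viscosities coincide on $D$. Both stages are driven by the Alessandrini-type identity: if $(\bm u_i,p_i)$ solves \eqref{model1} with viscosity $\mu_i$ and datum $\bm f_i$, then
\[
\int_\Omega(\mu_1-\mu_2)\,\mathcal D(\bm u_1):\mathcal D(\bm u_2)\dx=\big\langle(\Lambda_{\mu_1}-\Lambda_{\mu_2})\bm f_1,\bm f_2\big\rangle,
\]
which vanishes once $\mathcal C_{\mu_1}=\mathcal C_{\mu_2}$. I will also use a transfer lemma: since $\mu_1=\mu_2=\mu_0$ on the connected component $W$ of $\Omega\setminus(\overline{D_1}\cup\overline{D_2})$ meeting $\partial\Omega$, equality of Cauchy data together with unique continuation for the constant-coefficient Stokes system forces the $\mu_1$- and $\mu_2$-solutions with a common datum to agree in $W$ (pressures up to an additive constant); in particular their velocity traces and tractions match on $\partial D_i$ from outside. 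Combined with the Poisson-kernel representation of $\Lambda_{\mu_i}$, the same unique continuation principle also gives $G_1(x,y)=G_2(x,y)$ for $x,y\in W$, where $G_i(\cdot,y)$ is the Dirichlet Green's tensor of the $\mu_i$-Stokes operator in $\Omega$.

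\textbf{Stage 1 ($D_1=D_2$).} Suppose not; after relabelling there is $x_0\in\partial D_1$ with $x_0\notin\overline{D_2}$ and $\rho>0$ so that $\overline{B_{2\rho}(x_0)}\cap\overline{D_2}=\emptyset$. Green's second identity yields the representation $G_1(z,y)-G_2(z,y)=\int_\Omega(\mu_1-\mu_2)\,\mathcal D_x G_1(x,y):\mathcal D_x G_2(x,z)\dx$; taking $y,z\in W$ and using $G_1=G_2$ there makes the left side vanish, so
\[
0=\int_{D_1\cup D_2}(\mu_1-\mu_2)\,\mathcal D_x G_1(x,y):\mathcal D_x G_2(x,z)\dx,\qquad y,z\in W.
\]
Now let $y=z=y_h\to x_0$ along the exterior normal of $\partial D_1$. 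On $D_2$ and on $D_1\cap D_2$, which stay a fixed distance from $x_0$, the Green's tensors are $H^1$-bounded uniformly in $h$, so those contributions are $O(1)$; hence the piece over $D_1\setminus\overline{D_2}$, where $\mu_1-\mu_2=a_1-\mu_0$ has a fixed sign near $x_0$, must also stay bounded. The crux is to contradict this by the $H^1$-singularity analysis of $G_1(\cdot,y_h)$ across $\partial D_1$: after flattening $\partial D_1$ near $x_0$ and freezing $a_1$ at $a_1(x_0)$, the principal part of $G_1(\cdot,y_h)$ solves a constant-coefficient Stokes transmission problem whose singular part is obtained by reflection, while the Lipschitz perturbation $a_1-a_1(x_0)$ turns the remainder into a \emph{coupled Stokes--Brinkman system on the localized domain} (Stokes outside $D_1$, Brinkman inside), whose well-posedness and regularity — this is where the $C^8$ hypothesis is consumed — give $\|\mathcal D G_1(\cdot,y_h)\|_{L^2(B_\rho(x_0)\cap D_1)}^2\gtrsim h^{-1}$ and, crucially (since $a_1(x_0)\neq\mu_0$), a matching lower bound of order $h^{-1}$ for the cross term $\int_{B_\rho(x_0)\cap D_1}(a_1-\mu_0)\,\mathcal D G_1(\cdot,y_h):\mathcal D G_2(\cdot,y_h)\dx$. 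This blow-up contradicts the $O(1)$ bound, so $D_1=D_2=:D$. \emph{This localized transmission analysis is the main obstacle.}

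\textbf{Stage 2 ($\mu_1=\mu_2$ on $D$).} With a common inclusion $D$, the transfer lemma shows that for all data $\bm f,\bm g$ the interior solutions $\bm u_1^{\bm f}|_D$ and $\bm u_2^{\bm g}|_D$ are genuine $a_1$- resp.\ $a_2$-Stokes solutions whose traces on $\partial D$ range, by a Runge approximation argument (using that $\Omega\setminus\overline D$ is connected), over a dense subset of the admissible data; hence the Alessandrini identity upgrades to $\int_D(a_1-a_2)\,\mathcal D(\bm u_1):\mathcal D(\bm u_2)\dx=0$ for all interior Stokes solutions $\bm u_1$ (coefficient $a_1$) and $\bm u_2$ (coefficient $a_2$). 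A boundary-determination step — via singular solutions localized at a point of $\partial D$, or the usual highly oscillatory boundary-layer solutions — yields that $a_1$ and $a_2$, together with their derivatives up to the order allowed by the $C^8$ regularity, agree on $\partial D$, so that $a_1-a_2$ extends by zero to a function of matching regularity. Finally, reducing the variable-viscosity Stokes system on $D$ to a Schrödinger-type system and constructing complex-geometric-optics solutions for it, one makes the products $\mathcal D(\bm u_1):\mathcal D(\bm u_2)$ dense enough in $L^1(D)$ that the identity forces $a_1\equiv a_2$ on $D$. Hence $\mu_1\equiv\mu_2$ in $\Omega$.
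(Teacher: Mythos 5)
Your overall architecture (determine the interface, then the boundary values and normal derivative of the viscosity on it, then conclude in the interior by Runge approximation plus a known CGO-type uniqueness result) does match the paper's strategy, but there are two genuine gaps. First, you assume from the outset that both coefficients equal the \emph{same} constant $\mu_0$ outside the inclusions, and your ``transfer lemma'' ($\bm{u}_1=\bm{u}_2$ and $G_1=G_2$ in the outer component $W$) relies on this: if $\mu_{0,1}\neq\mu_{0,2}$, the difference of the two solutions does not satisfy a single Stokes system in $W$, so unique continuation cannot be invoked, and the traction data on $\partial\Omega$ involve different constants. The paper treats the outer constant as unknown and devotes its Claim~1 to proving $\mu_{0,1}=\mu_{0,2}$, by sending singular boundary data $\mathcal{G}^{\mu_{0,1}}(\cdot,y_j)e_l$ with $y_j\to y_*\in\partial\Omega$ and exploiting the well-posedness of the coupled Stokes--Brinkman system \eqref{3.4} in a localized domain touching $\partial\Omega$ to contradict the $H^1$ blow-up of the fundamental solution. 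This step is missing from your proposal and everything downstream depends on it.

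Second, the step you yourself flag as ``the main obstacle'' is precisely the technical core of the paper and is not supplied: the $H^1$ expansion of the Dirichlet Green's tensor across the interface (Lemmas~\ref{lemma1} and~\ref{theorem1}, which identify the leading singularity as $\frac{\mu_c}{\mu_c+\mu_0}\mathcal{G}^{\mu_c}+\frac{\mu_0}{\mu_c+\mu_0}\mathcal{G}^{\mu_0}$ via a transmission integral-equation analysis). Moreover, your proposed route to $D_1=D_2$ --- a lower bound of order $h^{-1}$ for the cross term $\int(a_1-\mu_0)\,\mathcal{D}G_1:\mathcal{D}G_2$ --- requires ruling out cancellation between the two different singular profiles of $G_1$ and $G_2$ near $x_0$, which you do not address. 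The paper sidesteps this entirely: in its Claim~2 it shows that the pair $(G^{\widetilde\mu_1},G^{\mu_2})$ solves the coupled Stokes--Brinkman problem \eqref{3.4} in $D_0\subseteq D_1\setminus\overline{D_2}$ with uniformly bounded data, so Lemma~\ref{lemma2} forces $\|G^{\widetilde\mu_1}(\cdot,z_j)\|_{\bm{H}^1(D_0)^3}\leq C$, contradicting the blow-up of the explicit singular part --- no sign or non-cancellation argument is needed. A cross-term identity of your type does appear in the paper, but only in Claim~4 for $\partial_n\mu|_{\partial D}$, where the integrand reduces to $|\mu_1-\mu_2|\,|\mathcal{D}\mathcal{G}^*_l|^2$ and is manifestly nonnegative. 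Finally, in your Stage~2 you claim boundary agreement of derivatives ``up to the order allowed by the $C^8$ regularity''; the paper proves (and the cited interior result needs) only agreement up to order one, and proving higher-order agreement by your method is not justified.
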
\\

\begin{remark}{\em
If there exists the boundary data $\bm{f}$  satisfying certain a priori assumptions, Theorem \ref{theorem2} also holds for local Cauchy data defined on any non-empty open subset of $\partial\Omega$. Furthermore, for the case with a Robin boundary condition $t(\bm{u},p)-\beta \bm{u} =\bm{f}$ on $\pa \Omega$ ($\beta\geq0$), we can prove, with slight modifications, that $\mu$ is uniquely determined by the local Robin-to-Dirichlet map defined on any non-empty open subset of $\partial\Omega$, without requiring any assumptions on $\bm{f}$.}
\end{remark}

It is well known that inverse boundary problems can date back to A.P.Calder\'{o}n \cite{Calderon1980}, who investigated the unique determination of $\sigma$ in the conductivity equation $\dive(\sigma(x)\triangledown u(x))=0$ from the Dirichlet-to-Neumann map. Significant advancements have been made in this direction since then; see fundamental papers like \cite{Kohn1984,Sylvester1987,Isakov2006,Astala2006,Nakamura1994,Pichler2018,Caro2014} for results on determining smooth coefficients in  $schr{\ddot{o}}dinger$ equations or wave equations. Furthermore, we refer to \cite{Kohn1985,Imanuvilov2010,Kenig2007,Isakov1988,Isakov2007} and the related references cited therein for results on determining discontinuous coefficients or using partial boundary measurements. However, uniqueness results for the Stokes and Navier-Stokes equations remain relatively scarce, both theoretically and numerically. It was shown in \cite{Heck2006} by Heck \emph{et al.} that a smooth viscosity coefficient $\mu(x)$ in the Stokes equations can be uniquely determined from the boundary data by constructing exponentially growing solutions connecting with $\overline{\partial}$-method in $\mathbb{R}^3$. This result was later extended by Li \emph{et al.} to the case of Navier-Stokes equations \cite{Li2007} using linearization method.  In the two-dimensional case, Imanuvilov \emph{et al.} in \cite{Imanuvilov2015} and Lai \emph{et al.} in\cite{Lai2015} established similar identifiability results for $\mu(x)$ in the Stokes and Navier-Stokes equations, respectively, using different measurements data. Furthermore, if an unknown solids is immersed in an unbounded  homogeneous fluid, the inverse problem reduces to determining the shape and location of the solid. In this case, we refer to \cite{Alves2007, Alvarez2005,Heck2007} for related works. 

Note that all aforementioned works rely heavily on the assumption that $\mu(x)$ is smooth in $\Omega$. Additionally, it is essential to impose the a priori information 
\be
\label{eq:condiation.1}
\partial^{\alpha}\mu_1(x)=\partial^{\alpha}\mu_2(x)\quad\forall x\in\partial \Omega, \  |\alpha|\leq 1
\en
for the viscosity coefficients $\mu$. For a detailed discussion in both two-dimensional and three-dimensional cases, we refer to \cite{Heck2006,Li2007,Lai2015}. If $\Omega \in \mathbb{R}^3$ is a bounded convex domain with non-vanishing Gauss curvature on $\partial \Omega$, it was shown in \cite{Heck2006} that the uniqueness can be proved without the assumption \eqref{eq:condiation.1}. 
 
 In this paper, we investigate the inverse Stokes problem of recovering a discontinuous viscosity coefficient $\mu(x)$ from boundary measurements $\mathcal{C}_{\mu}$. A novel and simple approach will be proposed, where the key technique is to analyze the singularity of the Dirichlet Green's functions in the $H^1$-norm and construct a coupled PDE-system associated with the Stokes and Brinkman equations in a localized domain. With these preparations, we can first prove that $\mu$ is uniquely determined near the boundary $\pa \Omega$ by choosing a family of special boundary data $f$ that behaves like $1/\vert x-z\vert$, with $z\in \pa \Omega$. Then, the inverse problem is reduced to determining $D$ with its physical property from $\mathcal{C}_{\mu}$ in a known fluid domain. Through an elaborate analysis conducted in a similar manner, we further prove that $D$ and $\mu(x)|_{\pa D}$ can be uniquely determined from the boundary measurements $\mathcal{C}_{\mu}$. Moreover, based on a special inequality established for two different Dirichlet Green's functions, we also show the unique determination of $\pa^{\alpha}\mu|_{\pa \Omega}$ for $|\alpha|=1$ from $\mathcal{C}_{\mu}$. These improve all previous results in e.g.\cite{Heck2006,Lai2015,Li2007,Imanuvilov2015}, where $\pa^{\alpha}\mu|_{\pa \Omega}$ has to be assumed to be known in advance for $|\alpha|\leq1$. Finally, the uniqueness of $\mu(x)|_{D}$ follows from existing results \cite{Heck2006,Lai2015} with a denseness discussion. We note that our method is independent of the choice of dimensions, and with slight modifications, the main results in Theorem \ref{theorem2} can be easily extended to the two-dimensional case.

The structure of the paper is outlined as follows. In Section 2, we introduce some useful notations and function spaces in the $H^1$-norm sense, as well as the potential theory for Stokes and Brinkman equations. In Section 3, we establish a priori estimates for the Dirichlet Green's functions and prove the well-posedness of a coupled Stokes-Brinkman system, which plays a key role in the analysis of the inverse Stokes problem and is of interest in its own right. Section 4 is devoted to the detailed proof of the main result.

\section{Preliminaries}\label{sec2}
\setcounter{equation}{0}
In this section, we first introduce some useful notations and function spaces that are employed throughout the text, and then present the potential theory for the Stokes and Brinkman equations.
\subsection{Notations}
Let $\Omega\in\mathbb{R}^3$ be a bounded domain with a $C^2$-boundary $\partial\Omega$. The standard scalar Sobolev spaces on $\Omega$ and $\partial\Omega$ are denoted by $H^s(\Omega)$ and $H^s(\partial\Omega)$, respectively, where $s\in \mathbb{R}$ and $H^0 = L^2$. The vector-valued Sobolev spaces on these domains are defined as
\ben
&&\bm{H}^s(\Omega):=[H^s(\Omega)]^3, \;\bm{H}^s(\partial\Omega):=[H^s(\partial\Omega)]^3,\qquad \qquad s\in\mathbb{R},
\enn
and the tensor-valued spaces are given by
\ben
&&\bm{H}^s(\Omega)^3:=[H^s(\Omega)]^{3\times 3}, \;\bm{H}^s(\partial\Omega)^3:=[H^s(\partial\Omega)]^{3\times 3},\quad s\in\mathbb{R}.
\enn
Additionally, for $q>1$, we define $\bm{L}^{q}(\partial\Omega):=[L^{q}(\partial\Omega)]^3$. The inner product  for matrices $A,B\in\mathbb{R}^{3\times3}$ is defined by $A:B=\sum_{i,j=1}^3 A_{ij}B_{ij}$, and the induced norm is given by $|A|=\sqrt{A:A}$. For tensor fields $A,B\in L^2(\Omega)^{3\times3}$, the inner product is defined as 
\ben
\langle A,B\rangle_{L^2(\Omega)^{3\times3}}:=\int_{\Omega}A(x):B(x)dx.
\enn
For any smooth solenoidal vector fields $\bm{u},\bm{v}$ and scalar function $p$, we have the first Green's formula 
\ben
\int_{\Omega}(-{\rm{div}}(\mu(x)D(\bm{u}))+\nabla p)\cdot\bm{v}dx=-\int_{\partial\Omega}t(\bm{u},p)\cdot\bm{v}\ds(x)+\int_{\Omega}\mu(x) D(\bm{u}):D(\bm{v})dx.
\enn

\subsection{ Potential theory for Stokes and Brinkman equations}
Let $(\mathcal{G}^{\mu,\chi^2},\Pi^{\chi^2})$ be the fundamental solution of the Brinkman system
\be\label{Brinkman}
\begin{cases}
-{\rm div}(\mu\mathcal{D}(\mathcal{G}^{\mu,\chi^2}))+\nabla \Pi^{\chi^2}+\chi^2\mathcal{G}^{\mu,\chi^2}= \delta I_{3}&\quad\text{ in }\mathbb{R}^3,\\
{\rm div}\mathcal{G}^{\mu,\chi^2}=0&\quad\text{ in }\mathbb{R}^3,\\
\end{cases}
\en
in the distribution sense, where $ \delta$ is the Dirac delta function. The components of the fundamental solution $(\mathcal{G}^{\mu,\chi^2}_{ik},\Pi_i)$ are given by(cf.\cite{Kohr2008})
\ben
&&\mathcal{G}^{\mu,\chi^2}_{ik}(x-y)=\frac{1}{4\pi\mu}\frac{\delta_{ik}}{|x-y|}A_1(R)+\frac{1}{4\pi\mu}\frac{(x_i-y_i)(x_k-y_k)}{|x-y|^3}A_2(R),\\
&&\qquad\qquad\quad\Pi_i(x-y)=\frac{x_i-y_i}{4\pi|x-y|^3},\quad i,k=1,2,3,
\enn
where $R=\frac{\chi\sqrt{2}}{\sqrt{\mu}}|x-y|$, and
\ben
A_1(z)&=&2e^{-z}(1+z^{-1}+z^{-2})-2z^{-2},\\
A_2(z)&=&-2e^{-z}(1+3z^{-1}+3z^{-2})+6z^{-2}.
\enn
 
The components of the stress and pressure tensors $\mathcal{S}^{\mu,\chi^2}$ and $\Lambda^{\mu,\chi^2}$ for the Brinkman equation are given by 

\begin{small}
\ben
\mathcal{S}^{\mu,\chi^2}_{ijk}(x-y)
&=&-\frac{1}{4\pi}\bigg\{\delta_{ik}\frac{x_j-y_j}{|x-y|^3}D_1(R)+\left(\delta_{kj}\frac{x_i-y_i}{|x-y|^3}+ \delta_{ij}\frac{x_k-y_k}{|x-y|^3}\right)\cdot D_2(R)\\
&&
+\frac{(x_i-y_i)(x_j-y_j)(x_k-y_k)}{|x-y|^5}D_3(R)\bigg\},\\
\Lambda^{\mu,\chi^2}(x-y)&=&\frac{1}{8\pi}\frac{\delta_{ik}}{|x-y|^3}(R^2-2)+\frac{3}{4\pi}\frac{(x_i-y_i)(x_k-y_k)}{|x-y|^5},
\enn
\end{small}
where 
\begin{small}
\ben
D_1(z):&=&2e^{-z}(1+3z^{-1}+3z^{-2})-6z^{-2}+1,\\
D_2(z):&=&e^{-z}(z+3+6z^{-1}+6z^{-2})-6z^{-2},\\
D_3(z):&=&e^{-z}(-2z-12-30z^{-1}-30z^{-2})+30z^{-2}.
\enn
\end{small}

We define the hydrodynamic single- and double-layer potentials for the Brinkman equation on a closed and connected Lipschitz surface $S$ and a bounded domain $\Omega$ in $\mathbb{R}^3$ as follows
\ben
&&(\bm{V}^{\mu,\chi^2}_{S})_{k}(x,\bm{\phi})=\int_{S} \mathcal{G}_{kj}^{\mu,\chi^2}(x-y)\phi_{j}(y)\ds(y),\qquad \qquad \;x\in\mathbb{R}^3\backslash S,\\
&&(P_{S}^s)(x,\bm{\phi})=\int_{S}\Pi_j(x-y)\phi_{j}(y)\ds(y),\qquad\qquad\qquad \quad \; x\in \mathbb{R}^3\backslash S,\\
&&(\bm{W}^{\mu,\chi^2}_{S})_{k}(x,\bm{\varphi})=\int_{S} \mathcal{S}_{jkl}^{\mu,\chi^2}(y-x)n_l(y)\varphi_{j}(y)\ds(y),\quad \;\;x\in\mathbb{R}^3\backslash S,\\
&&(P_{\mu,\chi^2;S}^d)(x,\bm{\varphi})=\mu\int_{S}\Lambda_{jk}^{\mu,\chi^2}(x-y)n_{k}(y)\varphi_{j}(y)\ds(y),\quad x \in \mathbb{R}^3\backslash S,
\enn
and the volume potentials are given by
\ben
&&(\bm{V}_{\Omega}^{\mu,\chi^2})_{k}(x,\bm{\phi})=\int_{\Omega} \mathcal{G}^{\mu,\chi^2}_{kj}(x-y)\phi_{j}(y)\dy,\qquad \qquad\quad \quad x \in \mathbb{R}^3,\\
&&(P_{\Omega})(x,\bm{\phi})=\int_{\Omega}\Pi_{j}(x-y)\phi_{j}(y)\dy,\qquad\qquad\qquad\qquad \quad \; x \in \mathbb{R}^3.
\enn
The boundary integral operators corresponding to the Brinkman equation are defined as 
\ben
&&(\bm{\mathcal{V}}^{\mu,\chi^2}_{S})_{k}(x,\bm{\phi})=\int_{S} \mathcal{G}_{kj}^{\mu,\chi^2}(x-y)\phi_{j}(y)\ds(y),\qquad \qquad \quad\; x\in S,\\
&&(\bm{K}^{\mu,\chi^2}_{S})_{k}(x,\bm{\phi})=\int_{S} \mathcal{S}_{jkl}^{\mu,\chi^2}(y-x)n_l(y)\phi_{j}(y)\ds(y),\quad\qquad x\in S,\\
&&(\bm{K}^{\ast,\mu,\chi^2}_{S})_{k}(x,\bm{\phi})=\int_{S} \mathcal{S}_{jkl}^{\mu,\chi^2}(x-y)n_l(x)\phi_{j}(y)\ds(y),\qquad \;x\in S,\\
&&(\bm{D}^{\mu,\chi^2}_{S})_j(x,\bm{\phi})=\int_{S}D^{\mu,\chi^2}_{jl}(x,y)\phi_{l}(y)\ds(y), \qquad \qquad\qquad \; x\in S,
\enn
where 
\ben
&&D_{jl}^{\mu,\chi^2}(x,y)=-\Lambda_{lk}^{\chi^2}(x-y)n_k(y)n_j(x)\\
 &&\qquad\qquad\qquad+\frac{\mu}{2}\left(\frac{\partial}{\partial x_j}\mathcal{S}_{lik}^{\mu,\chi^2}(y-x)+\frac{\partial}{\partial x_i}\mathcal{S}_{ljk}^{\mu,\chi^2}(y-x)\right)n_i(x)n_k(y).
 \enn
 
We refer the reader to \cite{Kohr2008,Hsiao2008} for the jump relations and continuity properties of the layer potentials in classical H{\"o}lder or standard Sobolev spaces. In the case of $\bm{L}^{q}$-spaces, due to the weak singularity of the kernel of $\bm{\mathcal{V}}^{\mu,\chi^2}_{S}$, $\bm{K}_{S}^{\mu,\chi^2}$, and $\bm{K}^{\ast,\mu,\chi^2}_{S}$ for a closed surface of class $C^{2}$, the following theorem can be derived directly from \cite{Roland2001,Roland1998}.

\begin{theorem}\label{thm-2.2}
Let $S$ be a closed surface of class $C^{2}$. Then, the boundary integral operators $\bm{\mathcal{V}}^{\mu,\chi^2}_{S}$, $\bm{K}_{S}^{\mu,\chi^2}$, and $\bm{K}^{\ast,\mu,\chi^2}_{S}$ are bounded compact mappings in $\bm{L}^{q}(S)$ for $q\in(1,+\infty)$. Moreover, the jump relations hold in $\bm{L}^{q}(S)$.
\end{theorem}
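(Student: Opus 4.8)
The plan is to reduce everything to the classical potential theory of the stationary Stokes system (the formal limit $\chi=0$) by isolating the Brinkman corrections as lower-order perturbations, and then to invoke the $\bm{L}^q$-theory of weakly singular integral operators on a surface of class $C^2$. I would begin with the single-layer operator $\bm{\mathcal{V}}^{\mu,\chi^2}_S$. Expanding the exponential factors in $A_1,A_2$ about $R=0$ one checks that $A_1(R),A_2(R)\to 1$, so that $\mathcal{G}^{\mu,\chi^2}_{ik}(x-y)$ coincides with the Stokeslet $\frac{1}{4\pi\mu}\left(\frac{\delta_{ik}}{|x-y|}+\frac{(x_i-y_i)(x_k-y_k)}{|x-y|^3}\right)$ up to a remainder in which the $|x-y|^{-1}$ contributions cancel, leaving a bounded (in fact continuous) kernel. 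Writing $\mathcal{G}^{\mu,\chi^2}=\mathcal{G}^{\mu,0}+\mathcal{R}$, the operator with kernel $\mathcal{R}$ has a continuous kernel on the compact surface $S$ and is therefore bounded and compact on $\bm{L}^q(S)$ for every $q\in(1,\infty)$, while the Stokeslet part is weakly singular of order $O(|x-y|^{-1})$, an integrable singularity on the two-dimensional surface $S$; by the standard theory of weakly singular operators (\cite{Roland2001,Roland1998}) it too is bounded and compact on $\bm{L}^q(S)$.

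The delicate operators are the double-layer $\bm{K}^{\mu,\chi^2}_S$ and its adjoint $\bm{K}^{\ast,\mu,\chi^2}_S$, whose kernels arise from the stress tensor $\mathcal{S}^{\mu,\chi^2}_{jkl}$ contracted with a normal field. Taken verbatim, $\mathcal{S}^{\mu,\chi^2}$ scales like $|x-y|^{-2}$ near the diagonal, which is non-integrable on $S$. The crucial point is the geometric cancellation: after contraction with $n_l(y)$ (resp. $n_l(x)$) the dangerous terms all carry a factor of the form $(x-y)\cdot\bm{n}(y)$ (resp. $(x-y)\cdot\bm{n}(x)$), and for a surface of class $C^2$ one has the elementary uniform estimate $|(x-y)\cdot\bm{n}(y)|\le C|x-y|^2$ for $x,y\in S$. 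This lowers the effective order of the kernel from $|x-y|^{-2}$ to $|x-y|^{-1}$, i.e. to a weakly singular kernel. I would again split $\mathcal{S}^{\mu,\chi^2}=\mathcal{S}^{\mu,0}+\widetilde{\mathcal{R}}$ into the Stokes stresslet plus a Brinkman remainder, verify through the expansions of $D_1,D_2,D_3$ that $\widetilde{\mathcal{R}}$ is of strictly lower order, and apply the same $C^2$-cancellation to the stresslet part. With all kernels shown to be weakly singular, boundedness and compactness on $\bm{L}^q(S)$ follow from \cite{Roland2001,Roland1998}.

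For the jump relations in $\bm{L}^q(S)$, I would start from the classical jump formulas valid in Hölder or standard Sobolev spaces (\cite{Kohr2008,Hsiao2008}) and extend them by a density and uniform-boundedness argument: since $C^{0,\alpha}(S)$ is dense in $\bm{L}^q(S)$ and the relevant trace and traction limits are controlled by the boundary operators just shown to be bounded on $\bm{L}^q(S)$, the jump relations pass to the $\bm{L}^q$-limit. Alternatively, these relations are established directly in the $\bm{L}^q$-setting for Stokes and Brinkman layer potentials in \cite{Roland2001,Roland1998}, so it suffices to match the present kernels to that framework via the perturbation splittings above.

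The main obstacle I anticipate is the double-layer estimate: one must make the $C^2$ cancellation precise \emph{uniformly} in $x,y\in S$, and confirm that none of the Brinkman correction terms (the $D_i(R)$ contributions) reintroduce a $|x-y|^{-2}$ singularity. This is exactly where the $C^2$-regularity of $S$ is indispensable, and it is the step that distinguishes the present operators from the naive, non-integrable pointwise bound on $\mathcal{S}^{\mu,\chi^2}$.
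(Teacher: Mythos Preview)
Your proposal is correct and follows essentially the same route as the paper: the paper does not give a detailed proof but simply notes that the kernels of $\bm{\mathcal{V}}^{\mu,\chi^2}_{S}$, $\bm{K}_{S}^{\mu,\chi^2}$, and $\bm{K}^{\ast,\mu,\chi^2}_{S}$ are weakly singular on a $C^2$ surface and then cites \cite{Roland2001,Roland1998} for the $\bm{L}^q$ boundedness, compactness, and jump relations. Your Stokes-plus-Brinkman-remainder splitting and the $C^2$ cancellation $|(x-y)\cdot\bm{n}(y)|\le C|x-y|^2$ for the double-layer are precisely the mechanisms that justify the weak singularity the paper invokes, so your write-up is a fleshed-out version of the paper's one-line reduction.
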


\begin{remark}\label{remark-2.3}
If we take $\frac{\chi\sqrt{2}}{\sqrt{\mu}}\rightarrow 0$, then one has $A_1(\frac{\chi\sqrt{2}}{\sqrt{\mu}}|x-y|)\rightarrow 1$, $A_2(\frac{\chi\sqrt{2}}{\sqrt{\mu}}|x-y|)\rightarrow 1$,$D_1(\frac{\chi\sqrt{2}}{\sqrt{\mu}}|x-y|)\rightarrow 0$, $D_2(\frac{\chi\sqrt{2}}{\sqrt{\mu}}|x-y|)\rightarrow 0$ and $D_3(\frac{\chi\sqrt{2}}{\sqrt{\mu}}|x-y|)\rightarrow 3$. The fundamental solution  $(\mathcal{G}^{\mu,\chi^2},\Pi)$  and the stress tensors $\mathcal{S}^{\mu,\chi^2}$ of the Brinkman equation reduce to those associated with the Stokes equation, given by 
\ben
\mathcal{G}_{ik}^{\mu}(x-y)&=&\frac{1}{4 \pi\mu}\left\{\frac{\delta_{ik}}{|x-y|}+\frac{(x_i-y_i)(x_k-y_k)}{|x-y|^{3}}\right\},\quad i,k=1,2,3,\\
\mathcal{S}_{ijk}(x-y)&=&-\frac{3}{4\pi}\frac{(x_i-y_i)(x_j-y_j)(x_k-y_k)}{|x-y|^5},\qquad i,j,k=1,2,3.
\enn
The corresponding layer potentials and boundary integral operators are denoted by $\bm{V}^{\mu}_{S},\bm{W}^{\mu}_{S}$ and $\bm{\mathcal{V}}^{\mu}_{S},\bm{K}^{\mu}_{S},\bm{K}^{\ast,\mu}_{S},\bm{D}^{\mu}_{S}$, respectively. Moreover, the potential theory discussed above is also valid for the Stokes equation.
\end{remark}

For further studies on potential theory for the Stokes and Brinkman equations in $\mathbb{R}^2$, we refer readers to \cite{Kohr2004,Kohr2012}. 

\section{Some crucial Lemmas}

In this section, we present several technical lemmas that are used frequently in the proof of the uniqueness of the inverse Stokes problem. Throughout the following discussion, we denote generic constants using lowercase $c$ and uppercase $C$, with or without subscripts, which may take different values in different contexts.
\subsection{ A priori estimates of the Dirichlet Green's functions}
Define the Dirichlet Green's function $ (G^{\mu}(x,z),r^{\mu}(x,z))$ for $z \in\Omega$, which satisfies the following equations 
\be\label{3.0}
\begin{cases}
-{\rm div}(\mu(x)\mathcal{D}(G^{\mu}(x,z)))+\nabla r^{\mu}(x,z)=\delta (x-z)I_{3} &\text{ in } \Omega,\\
{\rm div}G^{\mu}(x,z)=0 &\text{ in }\Omega\backslash\{z\},\\
G^{\mu}(x,z)=0 &\text{ on }\partial\Omega,
\end{cases}
\en
in the distribution sense. For $z_{\ast}\in\partial D$, define
\ben
z_j:=z_{\ast}+\frac{\sigma}{j}\n(z_{\ast}), \quad j=1,2,\cdots
\enn
with a sufficiently small $\sigma >0$ such that $z_j \in B(z_{\ast},\sigma)$, where $B(z_{\ast},\sigma)$ denotes a small ball with radius $\sigma$ centred at $z_{\ast}$.

 Firstly, we consider the case with a piecewise constant  coefficient $\widetilde{\mu}$, defined by
\be\no
\widetilde{\mu}(x):=\mu_0\cdot\chi_{\Omega\backslash\overline{D}}(x)+\mu_c \cdot\chi_{\overline{D}}(x), 
\en
where $\mu_c$ is a positive constant satisfying $\mu_c\neq\mu_0$. We have the following regularity estimate of the Dirichlet Green's function $ (G^{\widetilde{\mu}}(x,z_{j}),r^{\widetilde{\mu}}(x,z_{j}))$.
\\
\begin{lemma}
\label{lemma1}
$G^{\widetilde{\mu}}(x,z_{j})$  has the estimate
\be\label{lem1}
\bigg\|G^{\widetilde{\mu},D}(x,z_{j})-\bigg\lbrace\frac{\mu_c}{\mu_c+\mu_0} \mathcal{G}^{\mu_c}(x-z_j)+\frac{\mu_0}{\mu_c+\mu_0} \mathcal{G}^{\mu_0}(x-z_j)\bigg\rbrace\bigg\|_{\bm{H}^1(D)^3}\leq C
\en
for all $j\in\mathbb{N}$, where $C$ is a positive constant independent of  $j\in\mathbb{N}$.
\end{lemma}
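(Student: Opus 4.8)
Set $F_j(x):=\frac{\mu_c}{\mu_c+\mu_0}\mathcal{G}^{\mu_c}(x-z_j)+\frac{\mu_0}{\mu_c+\mu_0}\mathcal{G}^{\mu_0}(x-z_j)$ for the bracketed tensor in \eqref{lem1}, and recall that $G^{\widetilde{\mu},D}(\cdot,z_j)$ is the restriction of $G^{\widetilde{\mu}}(\cdot,z_j)$ to $D$. Since the Stokeslet kernel $\mathcal{G}^{\mu}$ scales like $\mu^{-1}$, one has $F_j=\frac{2\mu_0}{\mu_c+\mu_0}\mathcal{G}^{\mu_0}(\cdot-z_j)=\frac{2\mu_c}{\mu_c+\mu_0}\mathcal{G}^{\mu_c}(\cdot-z_j)$, so $F_j$ is a constant multiple of a single Stokeslet, and because $z_j\notin\overline{D}$ each column of $F_j$ is a divergence-free solution of the homogeneous Stokes system with viscosity $\mu_c$ in $D$ (after rescaling the associated pressure). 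Hence $R_j:=G^{\widetilde{\mu},D}(\cdot,z_j)-F_j$ solves the same homogeneous Stokes system in $D$, and by the a priori estimates for the interior Stokes problem — together with a cut-off argument, noting that away from $z_{\ast}$ both $G^{\widetilde{\mu}}(\cdot,z_j)$ and $F_j$ are smooth and uniformly bounded in $j$ while $z_j\to z_{\ast}$ — the claim \eqref{lem1} reduces to a uniform $\bm{H}^1$-bound for $R_j$ in a fixed small neighbourhood of $z_{\ast}$ in $D$.

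The key step is to identify the leading singularity of $G^{\widetilde{\mu}}(\cdot,z_j)\big|_D$ at $z_{\ast}$. For this I would compare $G^{\widetilde{\mu}}(\cdot,z_j)$ near $z_{\ast}$ with the Green's tensor $\mathcal{T}(\cdot,z_j)$ of the model transmission problem on $\mathbb{R}^3$ split by the hyperplane tangent to $\partial D$ at $z_{\ast}$, with viscosity $\mu_0$ on the side containing $z_j$ and $\mu_c$ on the other; this model can be solved explicitly from the fundamental solution of Remark \ref{remark-2.3} by the image-system construction for a point force near a planar fluid--fluid interface (the Stokes analogue of the elementary reflection computation for a point charge near a dielectric interface), and one finds that the restriction of $\mathcal{T}(\cdot,z_j)$ to the $\mu_c$-side equals exactly $F_j$. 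Thus $\mathcal{T}$ and $F_j$ carry the same Stokeslet-order singularity at $z_j$ as seen from $D$, and $H_j:=G^{\widetilde{\mu}}(\cdot,z_j)-\mathcal{T}(\cdot,z_j)$, obtained by gluing the two pieces of $\mathcal{T}$, is divergence-free, solves homogeneous Stokes systems (viscosity $\mu_c$ in $D$, $\mu_0$ in $\Omega\backslash\overline{D}$), has smooth and uniformly bounded Dirichlet data on $\partial\Omega$ (both $z_j$ and its reflection stay away from $\partial\Omega$), and on $\partial D$ carries transmission jumps — of the field and of its normal stress — that are no longer of $z_j$-singularity order. Indeed, since $\partial D\in C^2$ deviates from its tangent hyperplane at $z_{\ast}$ by $O(|x-z_{\ast}|^2)$ and its unit normal by $O(|x-z_{\ast}|)$, while $\mathcal{T}(\cdot,z_j)$ has zero jumps across that hyperplane, the jumps of $\mathcal{T}$ across the curved $\partial D$ pick up these extra powers of $|x-z_{\ast}|$; using $|x-z_{\ast}|^2+\mathrm{dist}(z_j,\partial D)^2\simeq|x-z_j|^2$ on $\partial D$ near $z_{\ast}$ one checks that the velocity jump of $H_j$ is bounded in $\bm{H}^{1/2}(\partial D)^3$ (in fact it tends to $0$) and the normal-stress jump is bounded in $\bm{H}^{-1/2}(\partial D)^3$, uniformly in $j$.

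It then suffices to invoke uniform solvability of this Stokes transmission problem: after lifting the velocity jump, one represents the solution through layer potentials on $\partial D\cup\partial\Omega$ and uses the $\bm{L}^q$-boundedness, compactness and jump relations of Theorem \ref{thm-2.2} together with the Fredholm alternative (injectivity coming from uniqueness of the transmission problem) to obtain $\|H_j\|_{\bm{H}^1(D)^3}\le C$ uniformly in $j$; since $\mathcal{T}(\cdot,z_j)\big|_D=F_j$ this is exactly the desired bound for $R_j$, which with the reduction of the first paragraph gives \eqref{lem1}.

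The main obstacle is the second step: $H_j$ is the difference of two tensors that individually blow up in $\bm{H}^1(D)$ at the Stokeslet rate $\mathrm{dist}(z_j,\partial D)^{-1/2}$, so only the exact cancellation of their leading singular parts — which is precisely why the weights $\mu_c/(\mu_c+\mu_0)$ and $\mu_0/(\mu_c+\mu_0)$ appear in $F_j$ — keeps $H_j$ bounded. Making the curvature gain quantitative, and in particular verifying through the explicit image system that the transmitted flow of the flat-interface model is exactly $F_j$ with no additional, more singular, Stokeslet-doublet or source-doublet contribution, is the delicate part of the argument; by contrast, controlling the flattening and cut-off errors uniformly in $j$ is routine given the interior Stokes estimates.
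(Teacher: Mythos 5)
Your reduction to a neighbourhood of $z_{\ast}$, and the observation that $F_j=\frac{2\mu_0}{\mu_c+\mu_0}\,\mathcal G^{\mu_0}(\cdot-z_j)=\frac{2\mu_c}{\mu_c+\mu_0}\,\mathcal G^{\mu_c}(\cdot-z_j)$ solves the homogeneous Stokes system in $D$, are fine. The genuine gap sits exactly where you place the ``delicate part'': the assertion that the transmitted field of the flat two-fluid model is \emph{exactly} $F_j$. For the scalar equation $\dive(\epsilon\nabla u)=\delta_{x_0}$ this is the classical image-charge identity, but for the Stokes transmission problem the reflection principle is far more involved (cf.\ Lorentz's reflection theorem and the Aderogba--Blake solution for a point force near a planar fluid--fluid interface): solving the six interface conditions in partial Fourier variables generically produces, in addition to the transmitted Stokeslet, image Stokeslet-doublets and source-doublets located at $z_j$ with coefficients proportional to $h_j:=\mathrm{dist}(z_j,\partial D)$. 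Such a term behaves like $h_j|x-z_j|^{-2}$ with gradient of order $h_j|x-z_j|^{-3}$, and $\int_{D}h_j^{2}|x-z_j|^{-6}\dx\sim h_j^{-1}$: it blows up in $\bm H^1(D)$ at the same rate $h_j^{-1/2}$ as the Stokeslet itself, so it cannot be absorbed into the bounded remainder. Proving that these contributions are absent (or bounded) is therefore not a verification one can defer --- it \emph{is} the content of the lemma --- and your proposal assumes it rather than proves it. A secondary soft spot: even granting the flat model, your claim that the velocity jump of $H_j$ across the curved $\partial D$ is uniformly bounded in $\bm H^{1/2}(\partial D)^3$ does not follow from the $O(|x-z_{\ast}|^2)$ flatness gain alone; the resulting tangential gradient is only $O(|x-z_j|^{-1})$, which lies in $L^q(\partial D)$ uniformly for $q<2$ but not in $L^2$, and interpolation then yields a logarithmically divergent $H^{1/2}$ bound.

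The paper's proof avoids any explicit image computation. It subtracts the \emph{piecewise} free-space Stokeslet ($\mathcal G^{\mu_c}$ in $D$, $\mathcal G^{\mu_0}$ outside), so that the error solves a transmission problem whose velocity jump $\bm h=\mathcal G^{\mu_c}-\mathcal G^{\mu_0}$ is controlled only in $\bm L^q(\partial D)$, $1<q<2$, uniformly in $j$; it represents the error by layer potentials, obtains uniform density bounds from the Fredholm alternative for the system \eqref{4.9} in $\bm H^{-1/2}(\partial D)\times\bm L^q(\partial D)\times\bm H^{1/2}_0(\partial\Omega)$, and then reads off the coefficient $\frac{2\mu_0}{\mu_c+\mu_0}=(a\mu_c)^{-1}$ from the diagonal entry $a=\frac{1}{2\mu_0}+\frac{1}{2\mu_c}$ of that system together with the smoothing of the compact off-diagonal operators (estimate \eqref{4.14}); finally the singular double-layer potential $\bm W_{\partial D}(\cdot,\bm h)$ is converted back into $-\tfrac12(\mathcal G^{\mu_c}-\mathcal G^{\mu_0})$ modulo $\bm H^1(D)$ by the auxiliary Dirichlet problem \eqref{4.15}, whose boundary data $2\bm K_{\partial D}\bm h$ is uniformly bounded in $\bm H^{1/2}(\partial D)$ (estimate \eqref{4.16}). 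If you wish to salvage your route, you must carry out the flat-interface computation in full and verify the cancellation of all $O(h_j)$ multipoles at $z_j$; the paper's functional-analytic extraction is precisely the device that sidesteps that computation.
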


\begin{proof}
Define 
\ben
\mathcal{G}^{\widetilde{\mu}}(x,z_j):=\begin{cases}
\mathcal{G}^{\mu_c}(x,z_j)&\text{ for } x\in \overline D,\\
\mathcal{G}^{\mu_0}(x,z_j)&\text{ for } x\in\Omega\backslash\overline{D}.
\end{cases}
\enn
for each $j\in\mathbb{N}$. Let $\bm{U}_l(x,z_j):=G_l^{\widetilde{\mu}}(x,z_j)-\mathcal{G}_l^{\widetilde{\mu}}(x,z_j)$ and $R_l(x,z_j):=r_l^{\widetilde{\mu}}(\cdot,z_j)-\Pi_l(x,z_j)$ for each $ j\in \mathbb{N}$, $l=1,2,3$. Then, it is easily founded that $(\bm{U}_l(x,z_j),R_l(x,z_j))$ satisfy the following Stokes equations
\be
\label{4.1}
\begin{cases}
-{\rm div}(\mu_0 \mathcal{D}(\bm{u}))+\nabla p=0 &\quad\text{ in }\Omega\backslash\overline{D},\\
{\rm div}\bm{u}=0 &\quad\text{ in }\Omega\backslash\overline{D},\\
-{\rm div}(\mu_c \mathcal{D}(\bm{v}))+\nabla m=0 &\quad\text{ in } D,\\
{\rm div}\bm{v}=0 &\quad\text{ in } D,
\end{cases}
\en 
with boundary conditions
\be
\label{4.2}
\begin{cases}
\bm{u}-\bm{v}=\bm{h}&\quad x\in\partial D,\\
t(\bm{u},p)-t(\bm{v},m)=\bm{g}&\quad x\in\partial D,\\
\bm{u}=\bm{k}&\quad x\in\partial\Omega,
\end{cases}
\en
where
\be
\label{4.3}
\begin{cases}
\bm{h}=\bm{h}^{(j)}_l:=\mathcal{G}_l^{\mu_c}(x,z_j)-\mathcal{G}_l^{\mu_0}(x,z_j)\in \bm{L}^q(\partial D), \quad 1<q<2,\\
\bm{g}=\bm{g}^{(j)}_l:=0\in\bm{H}^{-\frac{1}{2}}(\partial D),\\
\bm{k}=\bm{k}^{(j)}_l:=G_l^{\widetilde{\mu},D}(x,z_j)-\mathcal{G}_l^{\mu_0}(x,z_j)\in \bm{H}_0^{\frac{1}{2}}(\partial\Omega),
\end{cases}
\en
Clearly, it can be easily verified that
\be
\label{4.4}
\|\bm{h}^{(j)}_l\|_{\bm{L}^q(\partial D)}+\|\bm{g}^{(j)}_l\|_{\bm{H}^{-1/2}(\partial\Omega)}+\|\bm{k}^{(j)}_l\|_{\bm{H}_0^{1/2}(\partial D)}\leq C
\en
for all $ j\in \mathbb{N}$, $l=1,2,3$.

Now, we seek the solutions $(\bm{u},p)$ and $(\bm{v},m)$ of \eqref{4.1}-\eqref{4.2} in the form
\begin{small}
\be
\label{4.5}
&&\bm{u}(x)=\bm{V}^{\mu_0}_{\partial D}(x,\bm{\phi}_1)+\frac{1}{\mu_0}\bm{W}_{\partial D}(x,\bm{\phi}_2)+\bm{W}_{\partial\Omega}(x,\bm{\phi}_3) \qquad x\in\mathbb{R}^3\backslash(\partial D\cup\partial\Omega),\\
\label{4.7}
&&p(x)=P_{\partial D}^s(x,\bm{\phi}_1)+\frac{1}{\mu_0}P_{\mu_0,\partial D}^{d}(x,\bm{\phi}_2)
+P_{\mu_0,\partial\Omega}^{d}(x,\bm{\phi}_3)\quad x\in\mathbb{R}^3\backslash(\partial D\cup\partial\Omega),\\
\label{4.6}
&&\bm{v}(x)=\bm{V}^{\mu_c}_{\partial D}(x,\bm{\phi}_1)+\frac{1}{\mu_c}\bm{W}_{\partial D}(x,\bm{\phi}_2) \ \ \qquad\quad\qquad\qquad\quad x\in\mathbb{R}^3\backslash\partial D,\\
\label{4.8}
&&m(x)=P^s_{\partial D}(x,\bm{\phi}_1)+\frac{1}{\mu_c}P_{\mu_c,\partial D}^{d}(x,\bm{\phi}_2)\qquad\quad\qquad\qquad\quad x\in\mathbb{R}^3\backslash\partial D,
\en
\end{small}
where the density $\bm{\phi}_1=(\phi_{11},\phi_{12},\phi_{13})\in\bm{H}^{-1/2}(\partial D)$,
$\bm{\phi}_2=(\phi_{21},\phi_{22},\phi_{23})\in\bm{L}^q(\partial D)$ ($1<q<2$) and $\bm{\phi}_3=(\phi_{31},\phi_{32},\phi_{33})\in\bm{H}_0^{1/2}(\partial\Omega)$. Utilizing the jump relations and Theorem \ref{thm-2.2}, we transform \eqref{4.1}-\eqref{4.2} into the following operator equation:
\be
\label{4.9}
(\bm{A}+\bm{L})\Psi=\bm{\varrho} \quad\text{ in }\bm{H}^{-1/2}(\partial D)\times\bm{L}^q(\partial D)\times\bm{H}_0^{1/2}(\partial\Omega),
\en
where $\Psi:=(\bm{\phi}_1,\bm{\phi}_2,\bm{\phi}_3)^{\top}$ and $\bm{\varrho}:=(g,h,k)^{\top}$. The operators $\bm{A}$ and $\bm{L}$ are defined by
\ben
\bm{A}:=
\left(
\begin{array}{ccc}
-{I} & 0  & 0 \\
0  & a{I} & 0 \\
0  & 0 &-\frac{1}{2}{I}\\
\end{array}
\right),\quad
\bm{L}:=
\left(
\begin{array}{ccc}
0 & 0 &  \bm{D}^{\mu_0}_{\partial\Omega, \partial D} \\
\bm{\mathcal{V}}^{\mu_0}_{\partial D}- \bm{\mathcal{V}}^{\mu_c}_{\partial D}  & b\bm{K}_{\partial D} & \bm{K}_{\partial\Omega,\partial D}\\
\bm{\mathcal{V}}^{\mu_0}_{\partial D,\partial\Omega} & \frac{1}{\mu_0}\bm{K}_{\partial D,\partial\Omega}& \bm{K}_{\partial\Omega}\\
\end{array}
\right)
\enn
with $a:=(\frac{1}{2\mu_0}+\frac{1}{2\mu_c})$, $b:=(\frac{1}{\mu_0}-\frac{1}{\mu_c})$, where $\bm{\mathcal{V}}^{\mu}_{\partial D,\partial\Omega}$ is defined by
\ben
&&(\bm{\mathcal{V}}^{\mu}_{\partial D,\partial\Omega})_{k}(x,\bm{\phi})=\int_{\partial D} \mathcal{G}_{kj}^{\mu}(x-y)\phi_{j}(y)\ds(y),\quad x\in\partial\Omega,
\enn
and the operators $\bm{K}_{\partial\Omega,\partial D},\bm{K}_{\partial D,\partial\Omega}$ and $\bm{D}^{\mu_0}_{ \partial D,\partial\Omega}$ are defined in a similar manner. It follows from Remark \ref{remark-2.3} that all operators in $\bm{L}$ are compact in the corresponding Banach spaces. Equation \eqref{4.9} is thus of Fredholm type with index 0. Then, the existence of the solution of \eqref{4.9} follows from the uniqueness of \eqref{4.9}.

Let $(\bm{A}+\bm{L})\Psi=0$. Then, $(\bm{u},p)$ and $(\bm{v},m)$ defined by \eqref{4.5}-\eqref{4.8} with the density $\Psi$ satisfy the problem \eqref{4.1}-\eqref{4.2} with homogeneous boundary condition. By applying Green's formula, we obtain the relation
\ben
\mu_c\int_{D}\mathcal{D}(\bm{v}(x)):\mathcal{D}(\bm{v}(x))\dx
+\mu_{0}\int_{\Omega\backslash\overline{D}}\mathcal{D}(\bm{u}(x)):\mathcal{D}(\bm{u}(x))\dx=0,
\enn
which implies 
\ben
\frac{\partial{v}_{i}}{\partial x_k}+\frac{\partial{v}_{k} }{\partial x_i}&=&0,\quad \text{ in } D,\quad \quad i,k=1,2,3,\\
\frac{\partial {u}_{i}}{\partial x_k}+\frac{\partial{u}_{k} }{\partial x_i}&=&0,\quad \text{ in } \Omega\backslash\overline{D},\quad i,k=1,2,3.
\enn
Thus, $(\bm{u},p)=(\bm{0},c)$ in $\Omega\backslash\overline{D}$ and $(\bm{v},m)=(\bm{0},c)$ in $D$ with $c\in\mathbb{R}$. Furthermore, by the jump relations, it holds that 
\be
\label{4.10}
\bm{u}_{+}(x)-\bm{u}_-(x)=\bm{\phi}_3(x),\ t(\bm{u}_{+},p_{+})(x)-t(\bm{u}_-,p_{-})(x)=0 \quad x\in\partial\Omega.
\en
Here, the superscript $\bm{u}_\pm$ and $p_\pm$ indicate the limit values of the functions $\bm{u}$ and $p$ approaching $\partial\Omega$ from the exterior and interior of $\Omega$, respectively. Using again the Green formula for $\bm{u}$ in $\mathbb{R}^3\backslash\overline{\Omega}$ with \eqref{4.10}, gives
\be
\label{4.11}
\mu_{0}\int_{\mathbb{R}^3\backslash\overline{\Omega}}\mathcal{D}(\bm{u}(x)):\mathcal{D}(\bm{u}(x))\dx=-\int_{\partial {\Omega}}t(\bm{u}_{+},p_{+})\bm{u}_{+}\ds=-c\int_{\Gamma}\bm{\phi}_3\cdot\bm{n}=0.
\en
Consequently, $\bm{u}(x)=\bm{0}$ and $p(x)=c=0 $ in $\mathbb{R}^3\backslash\overline{\Omega}$. 
Moreover, from \eqref{4.10} and the form \eqref{4.5}-\eqref{4.7}, it can be verified that $\bm{\phi}_3(x)=\bm{0}$ on $\partial D$ and $\bm{u}(x)=\frac{\mu_c}{\mu_0}\bm{v}(x)$ in $\mathbb{R}^3\backslash\partial D$. 
Based on the unique continuation principle of the Stokes equation and the identities \eqref{4.10}, we can easily get $(\bm{u},p)=(\bm{v},m)=(\bm{0},0)$ in $\mathbb{R}^3\backslash\partial D$. It then follows from the jump relations that $\bm{\phi}_1(x)=\bm{\phi}_2(x)=\bm{0}$.

The above analysis shows that the homogeneous equation of \eqref{4.9} has only the trivial solution. In view of Fredholm's alternative, \eqref{4.9} has a unique solution $\Psi=(\bm{\phi}_1,\bm{\phi}_2,\bm{\phi}_3)^{\top}\in(\bm{H}^{-1/2}(\partial D)\times\bm{L}^q(\partial D)\times\bm{H}_0^{1/2}(\partial\Omega))$ satisfying the estimate 
\be\no
\|\bm{\phi}_1\|_{\bm{H}^{-1/2}(\partial D)}&+&\|\bm{\phi}_2\|_{\bm{L}^q(\partial D)}+\|\bm{\phi}_3\|_{\bm{H}_0^{1/2}(\partial\Omega)}\\\label{4.13}
&\leq & c(\|\bm{g}\|_{\bm{H}^{-1/2}(\partial D)}+\|\bm{h}\|_{\bm{L}^q(\partial D)}+\|\bm{k}\|_{\bm{H}_0^{1/2}(\partial\Omega)})\leq C.
\en
 Furthermore, utilizing again \eqref{4.9} and the embedding of $\bm{L}^q(\partial D)$ into $\bm{H}^{-1/2}(\partial D)$ with $q \geq 4/3$, we obtain
 \ben
 \|\bm{h}-a\bm{\phi}_2\|_{\bm{H}^{1/2}(\partial D)}<C.
 \enn
This, combining with \eqref{4.6} and \cite[Corollary 3.7]{David2012} gives
\ben
\bigg\|\bm{v}(x)-\frac{1}{\mu_c}\bm{W}_{\partial D}(x,\bm{\phi}_2)\bigg\|_{\bm{H}^{1}(D)}+\bigg\|\bm{W}_{\partial D}(x,\bm{h}-a\bm{\phi}_2)\bigg\|_{\bm{H}^{1}(D)}<C,
\enn
which further gives
\be
\label{4.14}
\bigg\|\bm{v}(x)-\frac{2\mu_0}{\mu_c+\mu_0}\bm{W}_{\partial D}(x,\bm{h})\bigg\|_{\bm{H}^{1}(D)}<C.
\en
Define 
\ben
\widetilde{\bm{W}}(x):&=&2\bm{W}_{\partial D}(x,\bm{h})+\mathcal{G}^{\mu_c}(x-z_j)-\mathcal{G}^{\mu_0}(x-z_j), \quad x\in D,\\
\widetilde{p}(x):&=&2P_{\mu_c;\partial D}^d(x,\bm{h})+(1-\frac{\mu_c}{\mu_0})\Pi(x,z_j), \quad x\in D.
\enn
We can verify that $(\widetilde{\bm{W}}(x),\widetilde{q}(x))$ satisfies the following Stokes problem
\be\label{4.15}
\begin{cases}
-{\rm div}\mu_c \mathcal{D}\lbrace\widetilde{\bm{W}}(x)\rbrace+\nabla \widetilde{p}(x)=0 &\quad\text{ in } D,\\
{\rm div}\widetilde{\bm{W}}(x)=0 &\quad\text{ in } D,\\
\widetilde{\bm{W}}(x)=2{W}_{\partial D}(x,\bm{h}) &\quad\text{ on }\partial D,
\end{cases}
\en
where ${W}_{\partial D}(x,\bm{h})$ is uniformly bounded in ${\bm{H}_0^{1/2}(\partial D)}$. By the well-posedness of the problem \eqref{4.15}, one has 
\be
\label{4.16}
\|2\bm{W}_{\partial D}(x,\bm{h})+ \lbrace\mathcal{G}^{\mu_c}(x-z_j)-\mathcal{G}^{\mu_0}(x-z_j)\rbrace\|_{\bm{H}^{1}(D)}<C.
\en
This, together with \eqref{4.14} and \eqref{4.16} gives the estimate \eqref{lem1}. The proof is thus complete.
\end{proof}

For a general viscosity coefficient $\mu\in C^{0,1}(\overline{D})$, we also have similar estimate.\\
\begin{lemma}
\label{theorem1}
$G^{\mu}(x,z_{j})$ has the estimate
\begin{small} 
\be\label{thm1}
\bigg\|G^{\mu}(x,z_{j})-\bigg\lbrace\frac{\mu(z_{\ast})}{\mu(z_{\ast})+\mu_0} \mathcal{G}^{\mu(z_{\ast})}(x-z_j)+\frac{\mu_0}{\mu(z_{\ast})+\mu_0} \mathcal{G}^{\mu_0}(x-z_j)\bigg\rbrace\bigg\|_{\bm{H}^1(D)^3}\leq C
\en
\end{small}
for all $j\in\mathbb{N}$, where $C$ is a positive constant independent of  $j\in\mathbb{N}$.
\end{lemma}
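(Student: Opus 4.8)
\emph{Proof strategy (plan).} The plan is to freeze $\mu$ at $z_{\ast}$ and reduce the statement to Lemma~\ref{lemma1}. Put $\mu_c:=\mu(z_{\ast})$; since $\mu=a$ on $\overline{D}$ and $\mu(z_{\ast})\neq\mu_0$ (the third standing assumption on the viscosity), the constant $\mu_c$ is positive with $\mu_c\neq\mu_0$, so the piecewise constant coefficient $\widetilde{\mu}=\mu_0\chi_{\Omega\backslash\overline{D}}+\mu_c\chi_{\overline{D}}$ and its Dirichlet Green's function $(G^{\widetilde{\mu}},r^{\widetilde{\mu}})$ of \eqref{3.0} are covered by Lemma~\ref{lemma1}. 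With this choice of $\mu_c$ the combination subtracted on the left of \eqref{thm1} coincides with the one subtracted on the left of \eqref{lem1}, so by the triangle inequality it suffices to prove the single new estimate
\[
\|G^{\mu}(\cdot,z_j)-G^{\widetilde{\mu}}(\cdot,z_j)\|_{\bm{H}^1(D)^3}\le C\qquad\text{for all }j\in\mathbb{N},
\]
with $C$ independent of $j$.

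To obtain it, set $\bm{U}:=G^{\mu}(\cdot,z_j)-G^{\widetilde{\mu}}(\cdot,z_j)$, $R:=r^{\mu}(\cdot,z_j)-r^{\widetilde{\mu}}(\cdot,z_j)$ and $\bm{M}:=(\mu-\widetilde{\mu})\,\mathcal{D}(G^{\widetilde{\mu}}(\cdot,z_j))=(a-a(z_{\ast}))\,\mathcal{D}(G^{\widetilde{\mu}}(\cdot,z_j))\,\chi_{\overline{D}}$. Because $z_j\in\Omega\backslash\overline{D}$, where $\mu=\widetilde{\mu}=\mu_0$, the two Green's functions $G^{\mu}(\cdot,z_j)$ and $G^{\widetilde{\mu}}(\cdot,z_j)$ carry the same Dirac source $\delta(\cdot-z_j)I_3$ and the same leading singularity $\mathcal{G}^{\mu_0}(\cdot-z_j)$ near $z_j$; hence $\bm{U}\in\bm{H}^1(\Omega)^3$, and subtracting the two systems \eqref{3.0} and rearranging $\mu\mathcal{D}(G^{\mu})-\widetilde{\mu}\mathcal{D}(G^{\widetilde{\mu}})=\mu\mathcal{D}(\bm{U})+\bm{M}$ gives
\[
-\mathrm{div}\big(\mu\,\mathcal{D}(\bm{U})\big)+\nabla R=\mathrm{div}\,\bm{M}\ \text{ in }\Omega,\qquad \mathrm{div}\,\bm{U}=0\ \text{ in }\Omega,\qquad \bm{U}=\bm{0}\ \text{ on }\partial\Omega.
\]
Since $\mu\ge\mu_{\min}:=\min\{\mu_0,\min_{\overline{D}}a\}>0$, the form $\int_{\Omega}\mu\,\mathcal{D}(\cdot):\mathcal{D}(\cdot)$ is coercive on the divergence-free subspace of $\bm{H}^1_0(\Omega)^3$ by Korn's inequality and the inf-sup condition holds on the $C^2$-domain $\Omega$, so the standard variational theory for the Stokes system (see, e.g., \cite{Girault1986,Temam1984}) yields $\|\bm{U}\|_{\bm{H}^1(\Omega)^3}\le C\,\|\bm{M}\|_{\bm{L}^2(\Omega)^{3\times3}}$. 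Everything therefore reduces to a uniform-in-$j$ bound for $\|\bm{M}\|_{\bm{L}^2(\Omega)^{3\times3}}$, and this is the only place where the Lipschitz regularity of $\mu$ is used.

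To bound $\bm{M}$, use Lemma~\ref{lemma1} once more to decompose, on $D$, $\mathcal{D}(G^{\widetilde{\mu}}(\cdot,z_j))=\frac{\mu_c}{\mu_c+\mu_0}\mathcal{D}(\mathcal{G}^{\mu_c}(\cdot-z_j))+\frac{\mu_0}{\mu_c+\mu_0}\mathcal{D}(\mathcal{G}^{\mu_0}(\cdot-z_j))+\bm{\rho}_j$ with $\|\bm{\rho}_j\|_{\bm{L}^2(D)^{3\times3}}\le C$ uniformly. Since $|\mathcal{D}(\mathcal{G}^{\kappa}(x-z_j))|\les|x-z_j|^{-2}$ for $\kappa\in\{\mu_c,\mu_0\}$ and $|a(x)-a(z_{\ast})|\le L|x-z_{\ast}|$ on $\overline{D}$, we obtain the pointwise bound $|\bm{M}(x)|\les|x-z_{\ast}|\,|x-z_j|^{-2}+|x-z_{\ast}|\,|\bm{\rho}_j(x)|$ for $x\in D$, with $\bm{M}\equiv\bm{0}$ on $\Omega\backslash\overline{D}$. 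The $\bm{\rho}_j$-contribution is controlled since $\mathrm{diam}(D)<\infty$; for the singular part, inserting $|x-z_{\ast}|\le|x-z_j|+\sigma/j$ and using $\mathrm{dist}(z_j,\overline{D})\ge c\sigma/j$ for $j$ large (valid because $\partial D$ is $C^2$ and $z_j$ sits on the outward normal at height $\sigma/j$) gives
\[
\int_D\frac{|x-z_{\ast}|^2}{|x-z_j|^4}\dx\;\les\;\int_D\frac{\dx}{|x-z_j|^2}+\frac{\sigma^2}{j^2}\int_D\frac{\dx}{|x-z_j|^4}\;\les\;1+\frac{\sigma^2}{j^2}\cdot\frac{j}{c\sigma}\;\le\;C,
\]
so that $\|\bm{M}\|_{\bm{L}^2(\Omega)^{3\times3}}\le C$ uniformly in $j$, which closes the argument. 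I expect the main obstacle to be precisely this uniform singular-integral estimate: one must recognize that the Lipschitz factor $|x-z_{\ast}|$ cancels exactly one power of the $|x-z_j|^{-2}$ singularity of $\mathcal{D}(\mathcal{G}^{\kappa})$, and keep careful track of the rate $\sigma/j$ at which the pole $z_j$ approaches $\overline{D}$; the remaining steps — the cancellation of the Dirac masses, the $\bm{H}^{-1}\!\to\!\bm{H}^1$ estimate for Stokes, and the reduction via Lemma~\ref{lemma1} — are routine.
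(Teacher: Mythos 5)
Your proposal is correct and follows essentially the same route as the paper: freeze the coefficient at $z_{\ast}$, invoke Lemma~\ref{lemma1} for the piecewise-constant Green's function, and then bound $G^{\mu}-G^{\widetilde{\mu}}$ in $\bm{H}^1$ by an energy (Korn/Lax--Milgram) estimate whose right-hand side is controlled because the Lipschitz factor $|a(x)-a(z_{\ast})|\lesssim|x-z_{\ast}|$ cancels one power of the $|x-z_j|^{-2}$ singularity of $\mathcal{D}(G^{\widetilde{\mu}})$. The only cosmetic differences are that you keep the source in divergence form (so the boundary term $\bm{\xi}^{(j)}_{2,l}\in\bm{H}^{-1/2}(\partial D)$ appearing in the paper's weak formulation \eqref{4.19} is absorbed into the distributional divergence of $\bm{M}$), and that you write out explicitly the uniform-in-$j$ singular-integral estimate which the paper leaves implicit.
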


\begin{proof}
Let
\be\no
\widetilde{\mu}:=\begin{cases}
\mu(z_{\ast}) &\text{ for } x\in \overline D,\\
\mu_0 &\text{ for } x\in\Omega\backslash\overline{D}.
\end{cases}
\en
It follows from Lemma \ref{lemma1} that 
\begin{small} 
\be
\label{4.17}
\bigg\|G^{\widetilde{\mu}}(x-z_j)-\bigg\lbrace\frac{\mu(z_{\ast})}{\mu(z_{\ast})+\mu_0} \mathcal{G}^{\mu(z_{\ast})}(x-z_j)+\frac{\mu_0}{\mu(z_{\ast})+\mu_0} \mathcal{G}^{\mu_0}(x-z_j)\bigg\rbrace\bigg\|_{\bm{H}^1(D)^3}\leq C
\en
\end{small} 
for all $j\in\mathbb{N}$. Define $\widetilde{G}^{\mu}(x,z_j):=G^{\mu}(x,z_j)-G^{\widetilde{\mu}}(x,z_j)$ and $\widetilde{r}^{\mu}(x,z_j):=r^{\mu}(x,z_j)-r^{\widetilde{\mu}}(x,z_j)$.
It is easily seen from \eqref{3.0} and \eqref{lem1} that $(\widetilde{G}^{\mu}(x,z_j),\widetilde{r}^{\mu}(x,z_j))$ satisfies the following equations 
\be\label{4.18}
\begin{cases}
-{\rm{div}}(\mu(x)\mathcal{D}(\widetilde{G}_l^{\mu}(x,z_j)))+\nabla \widetilde{r}_l^{\mu}(x,z_j)=\Theta(x,z_j)& \text{ in } \Omega,\\
{\rm div}\widetilde{G}_l^{\mu}(x,z_j)=0 &\text{ in }\Omega,\\
\widetilde{G}_l^{\mu}(x,z_j)=0 &\text{ on }\partial\Omega,
\end{cases}
\en
for each $ j\in N$, $l=1,2,3$, where $\Theta(x,z_j):={\rm{div}}(\mu(x)-\mu(z_{\ast})\mathcal{D}(G_l^{\widetilde{\mu}})(x,z_j))$. By integration by parts, $\widetilde{G}_l^{\mu}(x,z_j)$ satisfies the variational equation
\begin{small}
\be\no
a(\widetilde{G}_{l}^{\mu},\phi)&:=&\int_{\Omega}
(\mu(x)\mathcal{D}(\widetilde{G}_{l}^{\mu,D}(x,z_j)):\mathcal{D}(\phi)(x))\dx\\\no
&=&\int_{D}(\widetilde{\mu}(z_{\ast})-\mu(x))\mathcal{D}(G_{l}^{\widetilde{\mu}}):\mathcal{D}(\phi)\dx-\int_{\partial D}(\widetilde{\mu}(z_{\ast})-\mu(x))\mathcal{D}(G_{l}^{\widetilde{\mu}})\bm{n}\cdot\phi\ds\\\label{4.19}
&:=&\langle \bm{\xi}^{(j)}_{1,l},\mathcal{D}(\phi)\rangle_{L^2(\Omega)^{3\times3}}+\langle\bm{\xi}^{(j)}_{2,l},\phi\rangle_{\bm{H}^{-1/2}(\partial D)\times\bm{H}^{1/2}(\partial D)}
\en
\end{small}
for all $\phi\in\bm{H}_0^1(\Omega)$ satisfying $\dive \phi=0$. 
Since $\mu(x) \in C^{0,1}(\overline{D})$, it is demonstrated by \eqref{4.17} that
\ben
\|\bm{\xi}^{(j)}_{1,l}\|_{\bm{L}^2(D)}+\|\bm{\xi}^{(j)}_{2,l}\|_{\bm{H}^{-1/2}(\partial D)}<C.
\enn
Further, applying the korn's inequality and the Lax-Milgram theorem, we obtain 
\be\label{4.20}
 \|\widetilde{G}_l^{\mu}(x-z_j)\|_{\bm{H}^1(\Omega)}<C,
 \en
for all $ j\in N$, $l=1,2,3$. The required estimate \eqref{thm1} thus follows from \eqref{4.17} and \eqref{4.20}. This completes the proof of the lemma.
\end{proof}
\subsection{ A coupled Stokes-Brinkman problem}
Let $D_0\subset\mathbb{R}^3$ be a simply connected and bounded domain with $\pa D_0\in C^2$. Consider the following coupled Stokes-Brinkman problem in $D_0$, which is modeled by
\be\label{3.4}
\begin{cases}
-{\rm div}(\mu_{a}\mathcal{D}(\bm{w}_1))+\nabla q_{1}+\chi^2\bm{w}_1=\bm{\rho} &\quad \text{ in }D_0,\\
{\rm div}\bm{w}_1=0 &\quad \text{ in }D_0,\\
-{\rm div}(\mu_b\mathcal{D}(\bm{w}_2))+\nabla q_{2}=0 &\quad\text{ in }D_0,\\
{\rm div}\bm{w}_2=0 &\quad\text{ in }D_0,\\
\bm{w}_1-\bm{w}_2=\bm{h} &\quad\text{ on }\partial D_0,\\
t(\bm{w}_1,q_{1})-t(\bm{w}_2,q_{2})=\bm{g} &\quad \text{ on }\partial D_0,
\end{cases}
\en
where $\mu_{a},\mu_{b}$ are two different positive constants, and $\bm{\rho}\in\bm{L}^2(D_0)$, $\bm{h}\in\bm{H}_0^{1/2}(\partial D_0)$, $\bm{g}\in\bm{H}^{-1/2}(\partial D_0)$.
\\

\begin{lemma}
\label{lemma2}
If ${\rm Im}\chi^2\neq 0$, there exists a unique solution $((\bm{w}_1,q_{1}),(\bm{w}_2,q_{2}))\in(\bm{H}^1(D_0)\times\bm{L}^2(D_0))\times (\bm{H}^1(D_0)\times \bm{L}^2(D_0))$ to problem \eqref{3.4}, such that
\be\label{lem2.2}
\|\bm{w}_1\|_{\bm{H}^1(D_0)}+\|\bm{w}_2\|_{\bm{H}^1(D_0)}\leq
c(\|\bm{\rho}\|_{\bm{L}^2(D_0)}+\|\bm{h}\|_{\bm{H}^{1/2}(\partial D_0)}+\|\bm{g}\|_{\bm{H}^{-1/2}(\partial D_0)}).
\en
\end{lemma}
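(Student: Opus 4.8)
The plan is to establish well-posedness of the coupled system \eqref{3.4} by the standard route for transmission/interface problems: first reduce to homogeneous boundary data, then write the weak formulation on a suitable divergence-free product space, and finally invoke a Fredholm-alternative argument where the uniqueness (injectivity) step uses the hypothesis $\mathrm{Im}\,\chi^2\neq0$ in an essential way.

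\emph{Step 1 (reduction to homogeneous interface data).} I would first lift the data $\bm h\in\bm H_0^{1/2}(\partial D_0)$ and $\bm g\in\bm H^{-1/2}(\partial D_0)$. Since $\bm h$ has zero flux, there is a divergence-free $\bm H^1(D_0)$ extension $\bm H$ with $\bm H=\bm h$ on $\partial D_0$ and $\|\bm H\|_{\bm H^1(D_0)}\le c\|\bm h\|_{\bm H^{1/2}(\partial D_0)}$; replacing $\bm w_2$ by $\bm w_2$ and $\bm w_1$ by $\bm w_1-\bm H$ (equivalently, absorbing $\bm H$ into $\bm\rho$ via $-\dive(\mu_a\mathcal D(\bm H))+\chi^2\bm H$, which lies in $\bm H^{-1}(D_0)$) turns the condition $\bm w_1-\bm w_2=\bm h$ into a genuine transmission (continuity) condition. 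The traction jump $\bm g$ can likewise be moved to the right-hand side of the weak form as a bounded linear functional on $\bm H^{1/2}(\partial D_0)$, since in the weak formulation $\langle t(\bm w_1,q_1)-t(\bm w_2,q_2),\phi\rangle_{\partial D_0}$ appears naturally after integration by parts against a common test field $\phi$ with $\phi|_{\partial D_0}$ the common boundary trace.

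\emph{Step 2 (weak formulation and the sesquilinear form).} On the space $\bm X:=\{(\bm v_1,\bm v_2)\in\bm H^1(D_0)^2:\dive\bm v_i=0,\ \bm v_1-\bm v_2=\bm0\ \text{on }\partial D_0\}$ (after the lifting, so that in the unknowns the traces agree on $\partial D_0$), the problem reads: find $(\bm w_1,\bm w_2)$ in an affine translate of $\bm X$ such that
\be\no
B((\bm w_1,\bm w_2),(\phi_1,\phi_2)):=\mu_a\!\int_{D_0}\!\mathcal D(\bm w_1):\mathcal D(\phi_1)+\chi^2\!\int_{D_0}\!\bm w_1\cdot\phi_1+\mu_b\!\int_{D_0}\!\mathcal D(\bm w_2):\mathcal D(\phi_2)=F(\phi_1,\phi_2)
\en
for all $(\phi_1,\phi_2)\in\bm X$, with $F$ collecting the contributions of $\bm\rho$, the lifting of $\bm h$, and $\bm g$. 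The pressures $q_1,q_2$ are then recovered from the solenoidal test-space formulation by the usual de Rham / inf-sup argument (Nečas / Bogovskii), with the corresponding norm bound, so it suffices to solve for the velocities. The form $B$ is bounded on $\bm X\times\bm X$; its ``principal part'' $\mu_a\int\mathcal D(\bm w_1):\mathcal D(\phi_1)+\mu_b\int\mathcal D(\bm w_2):\mathcal D(\phi_2)$ is coercive on $\bm X$ by Korn's inequality (the zero boundary trace, or at least the rigid-motion-free quotient, makes $\|\mathcal D(\cdot)\|_{L^2}$ an equivalent $\bm H^1$-seminorm, and here both fields share the same trace so a single Korn/Poincaré estimate on the product controls both), while the term $\chi^2\int\bm w_1\cdot\phi_1$ is a compact (lower-order) perturbation because $\bm H^1(D_0)\hookrightarrow\bm L^2(D_0)$ is compact. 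Hence $B=\text{coercive}+\text{compact}$, and the Fredholm alternative applies: existence and the estimate \eqref{lem2.2} follow once uniqueness is shown.

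\emph{Step 3 (uniqueness via $\mathrm{Im}\,\chi^2\neq0$ — the main obstacle).} For the homogeneous problem ($\bm\rho=\bm0$, $\bm h=\bm0$, $\bm g=\bm0$) I would test with $\phi_i=\overline{\bm w_i}$ (the complex conjugates) and use the matched traces on $\partial D_0$ to cancel all boundary terms after integration by parts — this is where continuity of both velocity and traction across $\partial D_0$ is used, so that Green's formula applied to the Stokes part in $D_0$ and the Brinkman part in $D_0$ with the common interface produces no residual boundary integral. This yields
\be\no
\mu_a\int_{D_0}|\mathcal D(\bm w_1)|^2+\chi^2\int_{D_0}|\bm w_1|^2+\mu_b\int_{D_0}|\mathcal D(\bm w_2)|^2=0.
\en
Taking imaginary parts and using $\mu_a,\mu_b>0$ real gives $\mathrm{Im}(\chi^2)\int_{D_0}|\bm w_1|^2=0$, hence $\bm w_1\equiv0$ since $\mathrm{Im}\,\chi^2\neq0$; then the real part forces $\mathcal D(\bm w_2)\equiv0$, so $\bm w_2$ is a rigid motion, and $\bm w_2|_{\partial D_0}=\bm w_1|_{\partial D_0}=\bm0$ forces $\bm w_2\equiv0$. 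The associated pressures are then constants, and the constant is pinned down (e.g. by the normalization $\int_{D_0}q_i=0$ implicit in the $\bm L^2$-solvability of the divergence equation), so the homogeneous solution is trivial. The delicate point to get right is Step 3's boundary-term bookkeeping: one must verify that the weak formulation on $\bm X$ genuinely encodes both transmission conditions (the continuity of velocity is built into $\bm X$, and the continuity of traction emerges as the natural condition), so that no uncontrolled boundary integral survives; subtleties about the precise meaning of $t(\bm w_i,q_i)\in\bm H^{-1/2}(\partial D_0)$ and the duality pairing are handled by the standard trace theory for the Stokes operator. Everything else — Korn, compact embedding, de Rham for the pressures, and the continuity bounds — is routine, and assembling them yields \eqref{lem2.2}.
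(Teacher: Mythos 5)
Your uniqueness step (Step 3) is essentially the same energy argument the paper uses: Green's formula together with the matched Cauchy data on $\partial D_0$ yields $\mu_a\int_{D_0}|\mathcal D(\bm w_1)|^2+\chi^2\int_{D_0}|\bm w_1|^2=\mu_b\int_{D_0}|\mathcal D(\bm w_2)|^2$ (the Stokes energy lands on the right-hand side, not added on the left as in your display, though taking imaginary and then real parts still gives $\bm w_1=\bm w_2=\bm 0$). The genuine gap is in Step 2, the existence part. Problem \eqref{3.4} is not a standard transmission problem: the Brinkman and Stokes operators live on the \emph{same} domain $D_0$ and are coupled only through the jumps of their Cauchy data on the common boundary. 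If you build the velocity jump into the trial/test space $\bm X$ and want the traction jump $\bm g$ to emerge as the natural boundary condition, you must \emph{subtract} the two weak forms, i.e. the sesquilinear form is $\mu_a\int\mathcal D(\bm w_1):\mathcal D(\phi_1)+\chi^2\int\bm w_1\cdot\phi_1-\mu_b\int\mathcal D(\bm w_2):\mathcal D(\phi_2)$; the version you display with $+\mu_b$ produces the \emph{sum} $t(\bm w_1,q_1)+t(\bm w_2,q_2)$ on $\partial D_0$, which is not prescribed data, so that formulation does not close. The correct form has the indefinite principal part $\mu_a\Vert\mathcal D(\bm w_1)\Vert^2-\mu_b\Vert\mathcal D(\bm w_2)\Vert^2$ on $\bm X$, which is neither coercive nor a compact perturbation of a coercive form: this is exactly the structure of an interior transmission problem, whose Fredholm property requires additional machinery (e.g. a T-coercivity or inf-sup argument, which hinges on $\mu_a\neq\mu_b$) rather than Korn plus compact embedding. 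It is a warning sign that your argument never invokes the hypothesis that $\mu_a$ and $\mu_b$ are \emph{different} constants.

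The paper sidesteps this difficulty with a layer-potential ansatz \eqref{4.21}--\eqref{4.24}: representing both fields by the same pair of densities $(\bm\phi_1,\bm\phi_2)$ on $\partial D_0$ reduces \eqref{3.4} to $(\bm A+\bm B)\Psi=\bm\varrho$, where the diagonal operator $\bm A$ has entries $\pm(\tfrac1{2\mu_a}-\tfrac1{2\mu_b})I$ -- invertible precisely because $\mu_a\neq\mu_b$ -- and $\bm B$ is compact, so Fredholmness of index zero is immediate; injectivity is then shown by the energy identity you use plus an exterior-domain argument to kill the densities, and the estimate \eqref{lem2.2} follows from the bounded inverse. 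As written, the existence half of your proof does not go through; to salvage a variational route you would need to establish T-coercivity of the indefinite form, which is a substantive extra step.
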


\begin{proof}
We seek a solution of \eqref{3.4} in the form
\be\label{4.21}
&&\bm{w}_1(x)=\frac{1}{\mu_a}\bm{V}^{\mu_a,\chi^2}_{\partial D_0}(x,\bm{\phi}_1)+\frac{1}{\mu_a}\bm{W}^{\mu_a,\chi^2}_{\partial D_0}(x,\bm{\phi}_2)+\bm{V}_{D_0}^{\mu_a,\chi^2}(x,\mathbf{\rho}),\qquad\\\label{4.22}
&&\bm{w}_2(x)=\frac{1}{\mu_b}\bm{V}^{\mu_b}_{\partial D_0}(x,\bm{\phi}_1)+\frac{1}{\mu_b}\bm{W}_{\partial D_0}(x,\bm{\phi}_2),\quad x\in\mathbb{R}^3\backslash\partial D_0,\qquad
\en
and
\be
\label{4.23}
&&q_{1}(x)=\frac{1}{\mu_a} P^{s}_{\partial D_0}(x,\bm{\phi}_1)+\frac{1}{\mu_a}P_{\mu_a,\partial D_0}^{d}(x,\bm{\phi}_2)+P_{D_0}(x,\mathbf{\rho}),\qquad\\
\label{4.24}
&&q_{2}(x)=\frac{1}{\mu_b}P^{s}_{\partial D_0}(x,\bm{\phi}_1)+\frac{1}{\mu_b}P_{\mu_b,\partial D_0}^{d}(x,\bm{\phi}_2),\quad x\in\mathbb{R}^3\backslash\partial D_0.\qquad
\en
Using the properties of the layer potentials, the transmission problem \eqref{3.4} can be
reduced to the system of integral equations
\be
\label{4.25}
\bm{A} \left({\begin{array}{c}
\bm{\phi}_1  \\
\bm{\phi}_2  \\
\end{array}} \right)+\bm{B}\left({\begin{array}{c}
\bm{\phi}_1  \\
\bm{\phi}_2  \\
\end{array}} \right)=\left( {\begin{array}{c}
\bm{g}-t(\bm{V}_{D_0}^{\mu_a,\chi^2}(x,\mathbf{\rho}),P_{D_0}(x,\mathbf{\rho}))  \\
\bm{h}-\bm{V}_{D_0}^{\mu_a,\chi^2}(x,\mathbf{\rho})    \\
\end{array}} \right)
\en
in $\bm{H}^{-1/2}(\partial D_0)\times\bm{H}_0^{1/2}(\partial D_0)$, where the operators $\bm{A}$ and $\bm{B}$ are given by
\ben
\bm{A}=
\left(
\begin{array}{cc}
(\frac{1}{2\mu_a}-\frac{1}{2\mu_b})I & 0\\
0  & (\frac{1}{2\mu_b}-\frac{1}{2\mu_a})I\\
\end{array}
\right)
\enn
and
\ben
\bm{B}=
\left(
\begin{array}{cc}
\frac{1}{\mu_a}\bm{K}^{\ast,\mu_a,\chi^2}_{\partial D_0}-\frac{1}{\mu_b}\bm{K}^{\ast}_{\partial  D_0}& \frac{1}{\mu_a}\bm{D}^{\mu_a, \chi^2}_{\partial D_0}- \frac{1}{\mu_b}\bm{D}^{\mu_b}_{\partial D_0}\\
\frac{1}{\mu_a}\bm{\mathcal{V}}^{\mu_a, \chi^2}_{\partial D_0}- \frac{1}{\mu_b}\bm{\mathcal{V}}^{\mu_b}_{\partial D_0}& \frac{1}{\mu_a}\bm{K}^{\mu_a, \chi^2}_{\partial D_0}-\frac{1}{\mu_b}\bm{K}^{\mu_b}_{\partial D_0}\\
\end{array}
\right).
\enn
Here, ${I}$ denotes the identity operator in the corresponding Banach spaces. It follows from Theorems 4.1 and 4.2 in \cite{Kohr2008} that all operators in $\bm{B}$ are compact in the corresponding Banach spaces. Equation \eqref{4.25} is thus of Fredholm type with index 0. Therefore, the existence of a solution of \eqref{4.25} follows from the uniqueness of  \eqref{4.25}.

Let $(\bm{\phi}_1,\bm{\phi}_2)\in\bm{H}^{-1/2}(\partial D_0)\times\bm{H}_0^{1/2}(\partial D_0)$ be an arbitrary solution of the homogeneous equation \eqref{4.25}. 
Define $\bm{v}_1(x):=\bm{w}_1(x)-\bm{V}_{D_0}^{\mu_a,\chi^2}(x,\mathbf{\rho})$ and $p_1(x):=q_1(x)-P_{D_0}(x,\mathbf{\rho})$.
Then, $((\bm{v}_1(x),p_1(x)),(\bm{w}_2(x),q_2(x))$ satisfy the following coupling problem
 \be\label{4.26}
\begin{cases}
-{\rm div}(\mu_a\mathcal{D}(\bm{v}_1))+\nabla p_1+\chi^2\bm{v}_1=0 &\quad \text{ in }D_0,\\
{\rm div}\bm{v}_1=0,&\quad \text{ in }D_0,\\
-{\rm div}(\mu_b\mathcal{D}(\bm{w}_2))+\nabla q_2=0 &\quad\text{ in }D_0,\\
{\rm div}\bm{w}_2=0 &\quad\text{ in }D_0,\\
\bm{v}_1=\bm{w}_2 &\quad\text{ on }\partial D_0,\\
t(\bm{v}_1,p_1)=t(\bm{w}_2,q_2) &\quad \text{ on }\partial D_0.
\end{cases}
\en
By applying the Green's first identity and the boundary conditions \eqref{4.26}, we obtain
\be
\label{4.27}
\mu_a\int_{D_0}D(\bm{v}_1):D(\bm{v}_1)\dx+\chi^2 \int_{D_0}|\bm{v}_1|^2\dx=
\mu_b\int_{D_0}D(\bm{w}_2):D(\bm{w}_2)\dx.
\en
 This, together with ${\rm Im}\chi^2\neq 0$, implies $(\bm{v}_1,p_1)=(\bm{w}_2,q_2)=(0,c)$ in $D_0$. Furthermore, it can be derived from the jump property that 
\be\label{4.28}
\begin{cases}
\bm{v}_{1,+}(x)=\frac{1}{\mu_a}\bm{\phi}_2(x),\quad t(\bm{v}_{1,+},p_{1,+})(x)-c\bm{n}(x)=-\frac{1}{\mu_a}\bm{\phi}_1(x) &\quad x
\in\partial D_0,\\
\bm{w}_{2,+}(x)=\frac{1}{\mu_b}\bm{\phi}_2(x), \quad t(\bm{w}_{2,+},q_{2,+})(x)-c\bm{n}(x)=-\frac{1}{\mu_b}\bm{\phi}_1(x)&\quad x
\in\partial D_0.\\
\end{cases}
\en
Here, $\bm{n}$ denotes the outward normal at $\partial D_0$, and the superscript $\pm$ indicates the limit values of the function approaching $\partial D_0$ from the exterior and interior of $D_0$, respectively. Utilizing again the first Green's formula and \eqref{4.28}, we have
\begin{small}
\be\no
&&\mu_a^3\int_{\mathbb{R}^3\backslash\overline{D_0}}\mathcal{D}(\bm{v}_1):\mathcal{D}(\bm{v}_1)\dx+\chi^2\mu_a^2\int_{\mathbb{R}^3\backslash\overline{D_0}}|\bm{v}_1|^2\dx-\mu_b^3\int_{\mathbb{R}^3\backslash\overline{D_0}}\mathcal{D}(\bm{w}_2):\mathcal{D}(\bm{w}_2)\dx\\\no
&&=\int_{\partial D_0}\mu_b^2\bm{w}_{2,+}t(\bm{w}_{2,+},q_{2,+})-\mu_a^2\bm{v}_{1,+}t(\bm{v}_{1,+},p_{1,+})\ds\\\label{4.29}
&&=\int_{\partial D_0}c\bm{n}(\mu_b-\mu_a)\bm{\phi}_2\ds=0,
\en
\end{small}
where $\bm{\phi}_2 \in \bm{H}_0^{1/2}(\partial D_0)$. Extracting the imaginary part of \eqref{4.29} and utilizing \eqref{4.28}, it easily follows that $(\bm{v}_1,p_1)=(\bm{w}_2,q_2)=(\0,0)$ in $\mathbb{R}^3\backslash\overline{D_0}$ and $\bm{\phi}_1=\bm{\phi}_2=\0$ on $\partial D_0$, due to the uniqueness of the exterior Stokes problem. Thus, in view of Fredholm's alternative, there exists a unique solution $(\bm{\phi}_1,\bm{\phi}_2)\in\bm{H}^{-1/2}(\partial D_0)\times\bm{H}_0^{1/2}(\partial D_0)$ for \eqref{4.25}, satisfying the estimate
\ben
\|\bm{\phi}_1\|_{\bm{H}^{-1/2}(\partial D_0)}+\|\bm{\phi}_2\|_{\bm{H}_0^{1/2}(\partial D_0)}<c(\|\bm{\rho}\|_{\bm{L}^2(D_0)}+\|\bm{h}\|_{\bm{H}^{1/2}(D_0)}+\|\bm{g}\|_{\bm{H}^{-1/2}(D_0)}).
\enn
From the properties of the layer and volume potentials in \cite{Kohr2008}, the required estimate \eqref{lem2.2} follows. The proof is thus complete.
\end{proof}

\section{Proof of Theorem \ref{theorem2}}
This section is devoted to a detailed  proof of Theorem \ref{theorem2}. A novel and simple  technique will be proposed, based on a priori uniform estimate of the Dirichlet Green's functions and the well-posedness of a coupled Stokes-Brinkman equation in the $\bm{H}^1$-space. In the following, we intend to prove the global uniqueness of $\mu$ by four claims.

{\bf claim 1 }$\mu_{1}|_{\Omega\backslash\overline{D}_1}=\mu_{2}|_{\Omega\backslash\overline{D}_2}$.

This will be proved by contradiction. Assume $\mu_{1}|_{\Omega\backslash\overline{D}_1}\neq\mu_{2}|_{\Omega\backslash\overline{D}_2}$. Without loss of generality, for any point  $y_{\ast}\in \pa \Omega$, we can select a simply connected, $C^2$-smooth domain $\Omega_{0}\subseteq (\Omega\backslash\overline{D}_i)$ for $i=1,2$, and such that $y_{\ast}\in \pa \Omega_{0}\cap \pa\Omega $ which contains a non-empty open subset in the 2-dimensional manifold of $\R^3$. Define
\be\label{y}
y_j:=y_{\ast}+(\delta/j)\bm{n}(y_{\ast}), j=1,2,...
\en
with a sufficiently small $\delta > 0$, where $\bm{n}$ is the unit outward normal to $\partial \Omega$. 

Let $(\bm{u}^{(j)}_{1,l}(x),p^{(j)}_{1,l}(x))$ and $(\bm{u}^{(j)}_{2,l}(x),p^{(j)}_{2,l}(x))$ be the solutions to (1.1) with the boundary data $\bm{f}^{(j)}_l:=\mathcal{G}^{\mu_{0,1}}(x,y_j)e_l$, corresponding to $\mu_1(x)$ and $\mu_2(x)$ respectively, where $\mu_{0,1}:=\mu_1|_{\Omega\backslash\overline{D_1}}$. Define $\widetilde{\bm{u}}^{(j)}_{1,l}(x):=\bm{u}^{(j)}_{1,l}(x)-\mathcal{G}^{\mu_{0,1}}(x,y_j)e_l$ and $\widetilde{p}^{(j)}_{1,l}(x):=p^{(j)}_{1,l}(x)-\Pi(x,y_j)e_l$. Obviously,
 $(\widetilde{\bm{u}}^{(j)}_{1,l}(x),\widetilde{p}^{(j)}_{1,l}(x))$ satisfies the following Stokes problem 
\ben
\begin{cases}
-{\rm{div}}(\mu_{0,1}\mathcal{D}(\widetilde{\bm{u}}^{(j)}_{1,l}(x))+\nabla \widetilde{p}^{(j)}_{1,l}(x)=0& \text{ in } \Omega\backslash\overline{D_1},\\
{\rm div}\widetilde{\bm{u}}^{(j)}_{1,l}(x)=0 &\text{ in }\Omega\backslash\overline{D_1},\\
\widetilde{\bm{u}}^{(j)}_{1,l}(x)=0&\text{ on }\partial\Omega,\\
\widetilde{\bm{u}}^{(j)}_{1,l}(x)=\bm{u}^{(j)}_{1,l}(x)-\mathcal{G}^{\mu_{0,1}}(x,y_j)e_l&\text{ on }\partial D_1,
\end{cases}
\enn
for each $ j\in N$, $l=1,2,3$. It is easily seen that
\be\label{+4.03}
\|\bm{u}^{(j)}_{1,l}(x)-\mathcal{G}^{\mu_{0,1}}(x,y_j)e_l\|_{\bm{H}^{1}(\Omega\backslash\overline{D_1})}\leq C
\en
for all $j\in N$, $l=1,2,3$, due to the positive distance between $\partial D_1$ and $y_j$. Moreover, we can easily check that $(\bm{u}^{(j)}_{1,l}(x),p^{(j)}_{1,l}(x)),(\bm{u}^{(j)}_{2,l}(x),p^{(j)}_{2,l}(x))$ satisfy the coupled Stokes-Brinkman problem \eqref{3.4} in $\Omega_0$ with
\ben
\begin{cases}
\bm{\rho}^{(j)}_l:=\chi^2\bm{u}^{(j)}_{1,l}(x),\\
\bm{f}^{(j)}_l:=\bm{u}^{(j)}_{1,l}(x)-\bm{u}^{(j)}_{2,l}(x),\\
\bm{g}^{(j)}_l:=t(\bm{u}^{(j)}_{1,l},p^{(j)}_{1,l})(x)-t(\bm{u}^{(j)}_{2,l},p^{(j)}_{2,l})(x),
\end{cases}
\enn
and $\mu_a:=\mu_{1}|_{\Omega\backslash\overline{D}_1}, \mu_b:=\mu_{2}|_{\Omega\backslash\overline{D}_2}$. Due to $\mathcal{C}_{\mu_1}=\mathcal{C}_{\mu_2}$ and \eqref{+4.03}, one has
\ben
\|\bm{\rho}^{(j)}_l\|_{\bm{L}^{2}(\Omega_0)}+
\|\bm{f}^{(j)}_l\|_{\bm{H}^{1/2}(\partial \Omega_0)}+\|\bm{g}^{(j)}_l\|_{\bm{H}^{-1/2}(\partial \Omega_0)}
\leq &C.
\enn
Then, it follows from Lemma \ref{lemma2} with \eqref{+4.03} that
\ben
\|\mathcal{G}^{\mu_{0,1}}(x,y_j)e_l\|_{\bm{H}^{1}(\Omega_0)}\leq \|\bm{u}^{(j)}_{1,l}(x)-\mathcal{G}^{\mu_{0,1}}(x,y_j)e_l\|_{\bm{H}^{1}(\Omega_0)}+\|\bm{u}^{(j)}_{1,l}(x)\|_{\bm{H}^{1}(\Omega_0)}\leq C
\enn
for all $j\in N$, $l=1,2,3$. However, this leads to a contradiction since the fact
\ben
\|\mathcal{G}^{\mu_{0,1}}(x,y_j)\|_{\bm{H}^{1}(\Omega_0)}\rightarrow \infty\quad as \ j\rightarrow \infty.
\enn
Hence, $\mu_1|_{\Omega\backslash\overline{D_1}}=\mu_2|_{\Omega\backslash\overline{D_2}}$.

{\bf claim 2 } $D_1=D_2$.

For any $\bm{f}\in \bm{H}_0^{\frac{1}{2}}(\partial\Omega)$, let $(\bm{u}_1(x),p_1(x))$, $(\bm{u}_2(x),p_2(x))$ be the solutions to \eqref{model1} with the boundary data $\bm{f}$ and $(G^{\mu_1}(x,z),r^{\mu_1}(x,z)$,$(G^{\mu_2}(x,z),r^{\mu_2}(x,z)$ be the Dirichlet Green's functions, corresponding to $\mu_1(x)$ and $\mu_2(x)$ respectively. 

By the Green's representation theorem, we obtain
\begin{small}
\be\label{4.30}
&&\bm{u}_1(x)=\int_{\partial\Omega}\left(G^{\mu_1}(x,y)t(\bm{u}_1,p_1)(y)
-t'_{y}(G^{\mu_1},r^{\mu_1})(x,y)^{\top}\bm{u}_1(y)\right)\ds(y)\quad{\text{for}}\ x\in \Omega, \qquad\\\label{4.31}
&&\bm{u}_2(x)=\int_{\partial\Omega}\left(G^{\mu_2}(x,y)t(\bm{u}_2,p_2)(y)
-t'_{y}(G^{\mu_2},r^{\mu_2})(x,y)^{\top}\bm{u}_2(y)\right)\ds(y)\quad{\text{for}}\ x\in \Omega, \qquad
\en
\end{small}
where $t'_{ik}(\bm{u},p):=\delta^{k}_{i}p\bm{n}
+\mu\left(\frac{\partial \bm{u}^{i}}{\partial x_k}+\frac{\partial \bm{u}^{k}}{\partial x_i}\right)\bm{n}$ and the subscript $y$ on $t'$ indicates that the differentiation in $t'$ is performed with respect to $y$. Recalling 
$\mathcal{C}_{\mu_1}=\mathcal{C}_{\mu_2}$, the unique continuation property stated in \cite{Alvarez2005} can be thus applied to deduce 
\ben
 \bm{u}_1(x)=\bm{u}_2(x) \quad x\in \Omega\cap \mathcal{U},
 \enn
where $\mathcal{U}$ denotes the unbounded component of $\mathbb{R}^3\backslash(\overline{D_1\cup D_2})$. This, combined with \eqref{4.30}-\eqref{4.31} and $\mathcal{C}_{\mu_1}=\mathcal{C}_{\mu_2}$, yields 
 \begin{small}
\ben
\int_{\partial\Omega}t'_{y}(G^{\mu_1},r^{\mu_1})(x,y)^{\top}\bm{f}(y)\ds(y)
=\int_{\partial\Omega}t'_{y}(G^{\mu_2},r^{\mu_2})(x,y)^{\top}\bm{f}(y)\ds(y)
\enn
\end{small}
for $ x\in\Omega\cap \mathcal{U}$ and all $\bm{f}\in \bm{H}_0^{\frac{1}{2}}(\partial\Omega)$, which means that 
\begin{small}
\be
\label{4.32}
t'_{y}(G^{\mu_1},r^{\mu_1})(x,y)=t'_{y}(G^{\mu_2},r^{\mu_2})(x,y),\quad\ y\in\partial\Omega, x\in\Omega\cap \mathcal{U}.
\en
\end{small}
Moreover, applying the unique continuation property once again, one has
\begin{small}
\be
\label{4.34}
G^{\mu_1}(x,y)=G^{\mu_2}(x,y),\quad
r^{\mu_1}(x,y)=r^{\mu_2}(x,y), \quad\ x,y\in\Omega\cap \mathcal{U}(x\neq y).
\en
\end{small}

In the following, we prove $D_1=D_2$ by contradiction. Suppose $D_1\neq D_2$. Without loss of generality, let $z_{\ast}\in \pa D_1 \cap \pa \mathcal{U}$ and $D_0$ be a small $C^2$-smooth domain satisfying: $\rm{\romannumeral 1)}$ $z_{\ast}\in\pa D_0 \cap \pa D_1$; $\rm{\romannumeral 2)}$ $D_0\subseteq D_1 \setminus \overline{D_2}$; $\rm{\romannumeral 3)}$ $\pa D_{0}\cap \pa D_1$ contains a non-empty open subset in the 2-dimensional manifold of $\R^3$. Define 
\ben
z_j:=z_{\ast}+(\delta/j)\bm{n}(z_{\ast}), \quad j=1,2,...,
\enn
where $\delta > 0$ is sufficiently small, and $\bm{n}$ is the unit outward normal to $\partial D_1$. Let
\be\no
\widetilde{\mu}_1(x):=\begin{cases}
\mu_1(z_{\ast}) &\text{ for } x\in \overline D,\\
\mu_0 &\text{ for } x\in\Omega\backslash\overline{D}.
\end{cases}
\en 
 $(G_l^{\widetilde{\mu}_1}(x,z_j),r^{\widetilde{\mu}_1}(x,z_j))$ and $(G^{\mu_2}(x,z_j),r^{\mu_2}(x,z_j))$ satisfy the coupled Stokes-Brinkman problem \eqref{3.4} with
\ben
\begin{cases}
\bm{\rho}^{(j)}_l:=\chi^2 G_l^{\widetilde{\mu}_1}(x,z_j),\\
\bm{h}^{(j)}_l:=G_l^{\widetilde{\mu}_1}(x,z_j)-G_l^{\mu_2}(x,z_j),\\
\bm{g}^{(j)}_l:=t(G_l^{\widetilde{\mu}_1},r_l^{\widetilde{\mu}_1})(x,z_j)-t(G_l^{\mu_2},r_l^{\mu_2})(x,z_j),
\end{cases}
\enn
and $\mu_a:=\mu_1(z_{\ast}), \mu_b:=\mu_0$ for each $j \in \mathbb{N}$, $l=1,2,3$. It then follows from \eqref{lem1} and \eqref{4.34} that
\ben
\|\bm{\rho}^{(j)}_l\|_{\bm{L}^2(D_0)}+\|\bm{h}^{(j)}_l\|_{\bm{H}^{1/2}(\partial D_0)}+\|\bm{g}^{(j)}_l\|_{\bm{H}^{-1/2}(\partial D_0)}\leq C.
\enn
Furthermore, one has by Lemma \ref{lemma2} 
\begin{small}
\ben
&&\bigg\|\frac{\mu_1(z_{\ast})}{\mu_1(z_{\ast})+\mu_0} \mathcal{G}^{\mu_1(z_{\ast})}(x-z_j)+\frac{\mu_0}{\mu_1(z_{\ast})+\mu_0} \mathcal{G}^{\mu_0}(x-z_j)\bigg\|_{\bm{H}^1(D_0)^3}\\
&\leq & \bigg\|G^{\widetilde{\mu}_1,D_1}(x,z_j)-\bigg\lbrace\frac{\mu_1(z_{\ast})}{\mu_1(z_{\ast})+\mu_0} \mathcal{G}^{\mu_1(z_{\ast})}(x-z_j)+\frac{\mu_0}{\mu_1(z_{\ast})+\mu_0} \mathcal{G}^{\mu_0}(x-z_j)\bigg\rbrace\bigg\|_{\bm{H}^1(D_0)^3}\\
&&+\|G^{\widetilde{\mu}_1,D_1}(x,z_j)\|_{\bm{H}^1(D_0)^3}\\
&\leq & C.
\enn
\end{small}
 However, this leads to a contradiction with the following fact
\begin{small}
 \be\label{4.9-}
\bigg\|\frac{\mu_1(z_{\ast})}{\mu_1(z_{\ast})+\mu_0} \mathcal{G}^{\mu_1(z_{\ast})}(x-z_j)+\frac{\mu_0}{\mu_1(z_{\ast})+\mu_0} \mathcal{G}^{\mu_0}(x-z_j)\bigg\|_{\bm{H}^1(D_0)^3}\rightarrow\infty
\en
\end{small}
as $j\rightarrow\infty$. Hence, we conclude that $D_1=D_2$.
 
{\bf claim 3 } $\mu_1(x)|_{\pa D}=\mu_2(x)|_{\pa D}$.

Following similar argument as the previous claim 1, the aim of this point is to show $\mu_1(x)|_{\pa D}=\mu_2(x)|_{\pa D}$ by contradiction. Suppose $\mu_1(x)|_{\pa D}\neq\mu_2(x)|_{\pa D}$, Clearly, we can select a point $z_{\ast}\in \pa D$ such that $\mu_1(z_{\ast})\neq \mu_2(z_{\ast})$ and a small $C^2$-smooth domain $D_0$ satisfying: $\rm{\romannumeral 1)}$ $z_{\ast}\in\pa D_0 \cap \pa D$; $\rm{\romannumeral 2)}$ $D_0\subseteq D$; $\rm{\romannumeral 3)}$ $\pa D_{0}\cap \pa D$ contains a non-empty open subset in the 2-dimensional manifold of $\R^3$. Define 
\ben
z_j:=z_{\ast}+(\delta/j)\bm{n}(z_{\ast}), j=1,2,...
\enn
with $\delta > 0$ is sufficiently small, where $\bm{n}$ denotes the unit outward normal to $\partial D$. 
Let $(G^{\widetilde{\mu}_1}(x,z_j),r^{\widetilde{\mu}_1}(x,z_j))$ and $(G^{\widetilde{\mu}_2}(x,z_j),r^{\widetilde{\mu}_2}(x,z_j))$ be the Dirichlet Green's functions corresponding to $\widetilde{\mu}_1$ and $\widetilde{\mu}_2$, respectively, where
 \be\no
\widetilde{\mu}_i(x):=\begin{cases}
\mu_i(z_{\ast}) &\text{ for } x\in \overline D,\\
\mu_0 &\text{ for } x\in\Omega\backslash\overline{D},
\end{cases}
\en
for $i=1,2$. $(G_l^{\widetilde{\mu}_1}(x,z_j),r^{\widetilde{\mu}_1}(x,z_j))$ and $(G_l^{\widetilde{\mu}_2}(x,z_j),r^{\widetilde{\mu}_2}(x,z_j))$ satisfy the coupled Stokes-Brinkman problem \eqref{3.4} with
\ben
\begin{cases}
\bm{\rho}^{(j)}_l:=\chi^2G_l^{\widetilde{\mu}_1}(x,z_j),\\
\bm{h}^{(j)}_l:=G_l^{\widetilde{\mu}_1}(x,z_j)-G_l^{\widetilde{\mu}_2}(x,z_j),\\
\bm{g}^{(j)}_l:=t(G_l^{\widetilde{\mu}_1},r_l^{\widetilde{\mu}_1})(x,z_j)-t(G_l^{\widetilde{\mu}_2},r_l^{\widetilde{\mu}_2})(x,z_j).
\end{cases}
\enn
In view of Lemma \ref{lemma1} with \eqref{4.20} and \eqref{4.34}, we obtain
\ben
\|\bm{\rho}^{(j)}_l\|_{\bm{L}^2(D_0)}+\|\bm{h}^{(j)}_l\|_{\bm{H}^{1/2}(\partial D_0)}+\|\bm{g}^{(j)}_l\|_{\bm{H}^{-1/2}(\partial D_0)}\leq C,
\enn
for all $j \in \mathbb{N}$, $l=1,2,3$. By Lemma \ref{lemma2}, one has
\ben
 \|G^{\widetilde{\mu}_1}(x,z_j)\|_{\bm{H}^1(D_0)^3}\leq C.
\enn
It then follows from Lemma \ref{lemma1} that
\begin{small}
\ben
 \bigg\|\frac{\mu_1(z_{\ast})}{\mu_1(z_{\ast})+\mu_0} \mathcal{G}^{\mu_1(z_{\ast})}(x-z_j)+\frac{\mu_0}{\mu_1(z_{\ast})+\mu_0} \mathcal{G}^{\mu_0}(x-z_j)\bigg\|_{\bm{H}^1(D_0)^3} \leq  C.
\enn
\end{small}
 This is a  contradiction due to the fact \eqref{4.9-}. Therefore, we conclude that $\mu_1(x)|_{\pa D}=\mu_2(x)|_{\pa D}$.

{\bf claim 4 } $\partial^{\alpha}\mu_1(x)|_{\pa D}=\partial^{\alpha}\mu_2(x)|_{\pa D},\quad \forall x \in \pa D, |\alpha|=1$

In the following, we shall show $\partial^{\alpha}\mu_1(x)|_{\pa D}=\partial^{\alpha}\mu_2(x)|_{\pa D}$ for all $|\alpha|=1$ by contradiction. Suppose there exists a point $z_{\ast}\in \pa D$ with $\partial^{\alpha}\mu_1(z_{\ast})\neq\partial^{\alpha}\mu_2(z_{\ast})$ for some $|\alpha|=1$. This implies $\partial_{n}\mu_1(z_{\ast})\neq\partial_{n}\mu_2(z_{\ast})$ due to $\mu_1(x)|_{\pa D}=\mu_2(x)|_{\pa D}$. We can select a sufficiently small $C^2$-smooth domain $D_0$ and define $z_{j}$ similarly as in claim 3. $(G^{\mu_1}(x,z_j),r^{\mu_1}(x,z_j))$ and $(G^{\mu_2}(x,z_j),r^{\mu_2}(x,z_j))$ satisfy the following coupled Stokes system 
\ben
\begin{cases}
-{\rm div}(\mu_1\mathcal{D}(G_l^{\mu_1}(x,z_j))+\nabla r_l^{\mu_1}(x,z_j)=0 &\quad \text{ in }D_0,\\
{\rm div}G_l^{\mu_1}(x,z_j)=0 &\quad \text{ in }D_0,\\
-{\rm div}(\mu_2\mathcal{D}(G_l^{\mu_2}(x,z_j))+\nabla r_l^{\mu_2}(x,z_j)=0 &\quad\text{ in }D_0,\\
{\rm div}G_l^{\mu_2}(x,z_j)=0 &\quad\text{ in }D_0,\\
G_l^{\mu_1}(x,z_j)-G_l^{\mu_2}(x,z_j)=0 &\quad\text{ on }\partial D_0\cap \pa D,\\
t(G_l^{\mu_1},r_l^{\mu_1})-t(G_l^{\mu_2},r_l^{\mu_2})=0 &\quad \text{ on }\partial D_0 \cap \pa D,
\end{cases}
\enn
for each $j \in \mathbb{N}$ and $l=1,2,3$. Integrating by parts in $D_0$, it then follows from Lemma \ref{theorem1} with \eqref{4.34} that
\begin{small}
\ben
&&\bigg\vert\int_{D_0}(\mu_1(x)-\mu_2(x))\mathcal{D}(G_l^{\mu_1}(x,z_j)):\mathcal{D}(G_l^{\mu_2}(x,z_j))\dx\bigg\vert\\
&=&\bigg\vert\int_{\partial D_0\setminus \pa D}\lbrace G_l^{\mu_2}(x,z_j)\cdot t(G_l^{\mu_1},r_l^{\mu_1})(x,z_j)-G_l^{\mu_1}(x,z_j)\cdot t(G_l^{\mu_2},r_l^{\mu_2})(x,z_j)\rbrace\ds(x)\bigg\vert\\
&<&C
\enn
\end{small}
for all $j \in \mathbb{N}$, $l=1,2,3$. Lemma \ref{theorem1} can be again applied to derive the estimate
\be\label{4.35}
\int_{D_0}\vert\mu_1(x)-\mu_2(x)\vert\vert\mathcal{D}(\mathcal{G}_l^{\ast}(x-z_j))\vert^2\dx<C
\en 
for all $j \in \mathbb{N}$, $l=1,2,3$, where 
\begin{small}
\ben
\mathcal{G}_l^{\ast}(x-z_j):=\frac{\mu_1(z_{\ast})}{\mu_1(z_{\ast})+\mu_0} \mathcal{G}_l^{\mu_1(z_{\ast})}(x-z_j)+\frac{\mu_0}{\mu_1(z_{\ast})+\mu_0} \mathcal{G}_l^{\mu_0}(x-z_j)).
\enn
\end{small}
However, if $\partial_{n}\mu_1(z_{\ast})\neq\partial_{n}\mu_2(z_{\ast})$, the integral in \eqref{4.35} will explode as  $j\rightarrow \infty$, because
\ben
\vert\mu_1(x)-\mu_2(x)\vert=\vert\left( \partial_{n}\mu_1(z_{\ast}) -\partial_{n}\mu_2(z_{\ast})\right)n(z_{\ast})\cdot(x-z_{\ast})\vert+\mathcal{O}\vert x-z_{\ast}\vert^2 
\enn
as $x\rightarrow z_{\ast}$, with $x\in D_0$. Hence, $\partial^{\alpha}\mu_1(x)|_{\pa D}=\partial^{\alpha}\mu_2(x)|_{\pa D}$ for all $x \in \pa D, |\alpha|=1$.

{\bf claim 5 } $\mu_1(x)|_{\overline{D}}=\mu_2(x)|_{\overline{D}}$ if $\mu_1(x),\mu_2(x) \in C^{n_0}(\overline{D})$ for $n_0\geq 8$.

To accomplish this, we first introduce an operator $L_{\mu}:\bm{H}_0^{1/2}(\partial \Omega) \rightarrow  \bm{H}_0^{1/2}(\partial D)$, defined by $L_{\mu}\bm{f}=\bm{u}|_{\partial D} $, where $\bm{u}$ is the solution of the Stokes equations \eqref{model1} with boundary data $\bm{f}$.
Our goal is to demonstrate the denseness of Range$(L_{\mu})$ in $\bm{H}_0^{1/2}(\partial D)$. Following from the knowledge of functional analysis, it suffices to show the injectivity of the adjoint operator $L_{\mu}^{\ast}:(\bm{H}_0^{1/2}(\partial D))^{\ast}\rightarrow (\bm{H}_0^{1/2}(\partial \Omega))^{\ast}$. To this end, we consider the following problem
\be\label{+model1}
\begin{cases}
-{\rm div}(\mu_0\mathcal{D}(\bm{v}))+\nabla m=0,&\quad \text{ in }\Omega\backslash\overline{D},\\
{\rm div}\bm{v}=0,&\quad \text{ in }\Omega\backslash\overline{D},\\
-{\rm div}(\mu(x)\mathcal{D}(\bm{v}))+\nabla m=0,&\quad\text{ in }D,\\
{\rm div}\bm{v}=0,&\quad\text{ in }D,\\
\bm{v}_+=\bm{v}_-,&\quad\text{ on }\partial D,\\
t(\bm{v}_+,m_+)-t(\bm{v}_-,m_-)=\overline{\bm{\varsigma}},&\quad \text{ on }\partial D,\\
\bm{v}=0,&\quad\text{ on }\partial \Omega,
\end{cases}
\en
with a given $\bm{\varsigma} \in (\bm{H}_0^{1/2}(\partial D))^{\ast}$ in the distribution sense. Here, the superscript $\bm{v}_\pm$ and $m_\pm$ indicate the limit values of $\bm{v}$ and $m$ approaching $\partial D$ from the exterior and interior of $D$, respectively. By applying the variational method, it can be verified that problem \eqref{+model1} is well-posed for any $\varsigma \in (\bm{H}_0^{1/2}(\partial D))^{\ast}$. Using Green?s formula in conjunction with \eqref{+model1}, we derive 
\ben
\langle L_{\mu}\bm{f},\bm{\varsigma} \rangle_{\bm{H}_0^{1/2}(\partial D)\times(\bm{H}_0^{1/2}(\partial D))^{\ast}}&=&\int_{\partial D}\bm{u}\cdot \lbrace t(\bm{v}_+,m_+)-t(\bm{v}_-,m_-)\rbrace \ds\\
&=&\int_{\partial \Omega}\bm{u}\cdot t(\bm{v},m)\ds,
\enn
which implies $L_{\mu}^{\ast}{\bm{\varsigma}}=t(\overline {\bm{v}},\overline{m})$.
Suppose $L_{\mu}^{\ast}\bm{\varsigma}=t(\overline {\bm{v}},\overline{m})=0$ for some $\varsigma \in (\bm{H}_0^{1/2}(\partial D))^{\ast}$. The unique continuation property can be applied to deduce $\bm{v}=0$ in $\Omega\backslash\overline{D}$. This, along with the well-posedness of the Dirichlet-Stokes problem, implies $\bm{v}=0$ in $D$. It then follows from the boundary condition of $\bm{v}$ on $\partial D$ that $\bm{\varsigma} = 0$.  Hence, the Range($L_{\mu}$) is dense in $\bm{H}_0^{1/2}(\partial D)$. 
Next, consider the Dirichlet Stokes problem given by
\be\label{+model2}
\begin{cases}
-{\rm div}(\mu(x)\mathcal{D}(\bm{u}))+\nabla p=0,&\quad\text{ in }D,\\
{\rm div}\bm{u}=0,&\quad\text{ in }D,\\
\bm{u}=\bm{\tau},&\quad\text{ on }\partial D,
\end{cases}
\en
for any $\bm{\tau} \in \bm{H}^{1/2}_0(\partial D)$. Define the corresponding Dirichlet-to-Neumann map $\Lambda_{\pa D,\mu}:\bm{H}^{1/2}_0(\partial D)\rightarrow \bm{H}^{-1/2}(\partial D)$ and the Cauchy data set 
\ben
\mathcal{C}_{\pa D,\mu}:=\{(\bm{\tau},\Lambda_{\pa D,\mu}(\bm{\tau})):\bm{\tau}\in \bm{H}_0^{1/2}(\partial D)\}.
\enn
For any $\bm{\tau} \in \bm{H}^{1/2}_0(\partial D)$, there exists a uniformly bounded sequences ${\bm{f}}^{(j)} \in \bm{H}_0^{1/2}(\partial \Omega)$ satisfying $\lim_{j\to \infty}\|L_{\mu_1}\bm{f}^{(j)}-\bm{\tau}\|_{\bm{H}^{1/2}(\pa D)}=0$. From the fact that $\mathcal{C}_{\mu_1}=\mathcal{C}_{\mu_2}$ and $\mu_{1}|_{\Omega\backslash\overline{D}}=\mu_{2}|_{\Omega\backslash\overline{D}}$, we have
\ben
L_{\mu_{1}}\bm{f}^{(j)}=L_{\mu_{2}}\bm{f}^{(j)},\quad
t(\bm{u}^{(j)}_1,p^{(j)}_1)|_{\pa D}=t(\bm{u}^{(j)}_2,p^{(j)}_2)|_{\pa D}, 
\enn
where $(\bm{u}^{(j)}_1(x),p^{(j)}_1(x))$ and $(\bm{u}^{(j)}_2(x),p^{(j)}_2(x))$ are the solutions to the Stokes problem  \eqref{model1} with the boundary data $\bm{f}^{(j)}$, corresponding to $\mu_1(x)$ and $\mu_2(x)$ respectively. This leads to 
\ben
\Lambda_{\pa D,\mu_1}(\bm{\tau})=\lim\nolimits_{j \to \infty} t(\bm{u}^{(j)}_1,p^{(j)}_1)|_{\pa D}=\lim\nolimits_{j \to \infty} t(\bm{u}^{(j)}_2,p^{(j)}_2)|_{\pa D}=\Lambda_{\pa D,\mu_2}(\bm{\tau}),
\enn
which implies that $\mathcal{C}_{\pa D,\mu_1}=\mathcal{C}_{\pa D,\mu_2}$. Finally, combining with claim 3 and claim 4, it is following from \cite[Theorem 1]{Li2007} that $\mu_1(x)|_{\overline{D}}=\mu_2(x)|_{\overline{D}}$. The proof is thus complete.\\

\section*{Acknowledgments}

{This work was supported by the NNSF of China under Grand No. 12571460.}

\bibliographystyle{siam}
\newpage
\bibliography{ref3}

\begin{thebibliography}{10}

\bibitem{Alvarez2005}
{\sc C.~Alvarez, C.~Conca, L.~Friz, O.~Kavian, and J.~H. Ortega}, {\em Identification of immersed obstacles via boundary measurements}, Inverse Problems, 21 (2005), pp.~1531--1552.

\bibitem{Alvarez2007}
{\sc C.~Alvarez, G.~Hime, D.~Marchesin, and P.~G. Bedrikovetsky}, {\em The inverse problem of determining the filtration function and permeability reduction in flow of water with particles in porous media}, Transport in Porous Media, 70 (2007), pp.~43--62.

\bibitem{Alves2007}
{\sc C.~J. Alves, R.~Kress, and A.~L. Silvestre}, {\em Integral equations for an inverse boundary value problem for the two-dimensional stokes equations}, Journal of Inverse and Ill-posed Problems, 15 (2007), pp.~461--481.

\bibitem{Astala2006}
{\sc K.~Astala and L.~P{\"a}iv{\"a}rinta}, {\em Calder\'{o}n inverse conductivity problem in the plane}, Annals of Mathematics, 163 (2006), pp.~265--299.

\bibitem{Bennett1992}
{\sc Andrew Bennett}, {\em Inverse Methods in Physical Oceanography}, Cambridge University Press, Cambridge, 1992.

\bibitem{Calderon1980}
{\sc Alberto~P. Calder\'{o}n}, {\em On an inverse boundary value problem}, in Seminars on Numerical Analysis and its Application to Continuum Physics, Rio de Janeiro, 1980, Sociedade Brasileira de Matem\'{a}tica, pp.~65--73.

\bibitem{Caro2014}
{\sc P.~Caro and T.~Zhou}, {\em Global uniqueness for an ibvp for the time-harmonic maxwell equations}, Analysis \& PDE, 2 (2014), pp.~375--405.

\bibitem{David2012}
{\sc C.~David and R.~Kress}, {\em Inverse Acoustic and Electromagnetic Scattering Theory}, Springer, Berlin, 2013.

\bibitem{Girault1986}
{\sc V.~Girault and P.~A. Raviart}, {\em Finite Element Methods for Navier-Stokes Equations: Theory and Algorithms}, Springer-Verlag, Berlin Heidelberg, 1986.

\bibitem{Heck2006}
{\sc H.~Heck, X.~Li, and J.~N. Wang}, {\em Identification of viscosity in an incompressible fluid}, Indiana University Mathematics Journal, 56 (2006), pp.~2489--2510.

\bibitem{Heck2007}
{\sc H.~Heck, G.~Uhlmann, and J.~N. Wang}, {\em Reconstruction of obstacles in an incompressible fluid}, Inverse Problems and Imaging, 1 (2007), pp.~63--76.

\bibitem{Hsiao2008}
{\sc G.~C. Hsiao and W.~L. Wendland}, {\em Boundary Integral Equations}, Springer-Verlag, 2008.

\bibitem{Imanuvilov2010}
{\sc O.~Y. Imanuvilov, G.~Uhlmann, and M.~Yamamoto}, {\em The calder\'{o}n problem with partial data in two dimensions}, Journal of the American Mathematical Society, 23 (2010), pp.~655--691.

\bibitem{Imanuvilov2015}
{\sc O.~Y. Imanuvilov and M.~Yamamoto}, {\em Global uniqueness in inverse boundary value problems for the navier-stokes equations and lame system in two dimensions}, Inverse Problems, 31 (2015), p.~035004.

\bibitem{Isakov1988}
{\sc V.~Isakov}, {\em On uniqueness of recovery of a discontinuous conductivity coefficient}, Communications on Pure and Applied Mathematics, 41 (1988), pp.~865--877.

\bibitem{Isakov2006}
\leavevmode\vrule height 2pt depth -1.6pt width 23pt, {\em Inverse Problems for Partial Differential Equations (2nd ed.)}, Springer, New York, 2006.

\bibitem{Isakov2007}
\leavevmode\vrule height 2pt depth -1.6pt width 23pt, {\em On uniqueness in the inverse conductivity problem with local data}, Inverse Problems and Imaging, 1 (2007), pp.~95--105.

\bibitem{Kenig2007}
{\sc C.~E. Kenig, J.~Sj{\"o}strand, and G.~Uhlmann}, {\em The calder\'{o}n problem with partial data}, Annals of Mathematics, 165 (2007), pp.~567--591.

\bibitem{Kohn1984}
{\sc R.~Kohn and M.~Vogelius}, {\em Determining conductivity by boundary measurements}, Communications on Pure and Applied Mathematics, 37 (1984), pp.~289--298.

\bibitem{Kohn1985}
\leavevmode\vrule height 2pt depth -1.6pt width 23pt, {\em Determining conductivity by boundary measurements ii. interior results}, Communications on Pure and Applied Mathematics, 38 (1985), pp.~643--667.

\bibitem{Kohr2004}
{\sc M.~Kohr and I.~Pop}, {\em Viscous Incompressible Flow for Low Reynolds Numbers}, WIT Press, Southampton, UK, 2004.

\bibitem{Kohr2012}
{\sc M.~Kohr, G.~P. Sekhar, E.~M. Uluiet, and W.~L. Wendland}, {\em Two-dimensional stokes-brinkman cell model a boundary integral formulation}, Applicable Analysis, 91 (2012), pp.~251--275.

\bibitem{Kohr2008}
{\sc M.~Kohr, G.~P. Sekhar, and W.~L. Wendland}, {\em Boundary integral equations for a three-dimensional stokes–brinkman cell model}, Mathematical Models and Methods in Applied Sciences, 18 (2008), pp.~2055--2085.

\bibitem{Lai2015}
{\sc R.~Y. Lai, G.~Uhlmann, and J.~N. Wang}, {\em Inverse boundary value problem for the stokes and the navier-stokes equations in the plane}, Archive for Rational Mechanics and Analysis, 215 (2015), pp.~811--829.

\bibitem{Li2007}
{\sc X.~Li and J.~N. Wang}, {\em Determination of viscosity in the stationary navier-stokes equations}, Journal of Differential Equations, 242 (2007), pp.~24--39.

\bibitem{Nakamura1994}
{\sc G.~Nakamura and G.~Uhlmann}, {\em Global uniqueness for an inverse boundary problem arising in elasticity}, Inventiones Mathematicae, 118 (1994), pp.~457--474.

\bibitem{Pichler2018}
{\sc M.~Pichler}, {\em An inverse problem for maxwell’s equations with lipschitz parameters}, Inverse Problems, 34 (2018), pp.~1--21.

\bibitem{Polydorides2008}
{\sc N.~Polydorides, E.~Storteig, and W.~Lionheart}, {\em Forward and inverse problems in towed cable hydrodynamics}, Ocean Engineering, 35 (2008), pp.~1429--1438.

\bibitem{Roland1998}
{\sc P.~Roland}, {\em A point-source method for inverse acoustic and electromagnetic obstacle scattering problems}, IMA Journal of Applied Mathematics, 61 (1998), pp.~119--140.

\bibitem{Roland2001}
\leavevmode\vrule height 2pt depth -1.6pt width 23pt, {\em On the convergence of a new newton-type method in inverse scattering}, Inverse Problems, 17 (2001), pp.~1419--1434.

\bibitem{Sylvester1987}
{\sc J.~Sylvester and G.~Uhlmann}, {\em A global uniqueness theorem for an inverse boundary value problem}, Annals of Mathematics, 125 (1987), pp.~153--169.

\bibitem{Tanaka1994}
{\sc M.~Tanaka and H.~D. Bui}, {\em Inverse Problems in Engineering Mechanics}, Balkema, Rotterdam, The Netherlands, 1994.

\bibitem{Temam1984}
{\sc R.~Temam}, {\em Navier-Stokes Equations: Theory and Numerical Analysis}, North-Holland, 1984.

\end{thebibliography}

\end{document}